\numberwithin{equation}{section}
\theoremstyle{plain}
\newtheorem{theorem}[equation]{Theorem} 
\newtheorem{corollary}[equation]{Corollary} 
\newtheorem{lemma}[equation]{Lemma}
\newtheorem{proposition}[equation]{Proposition}
\theoremstyle{definition}
\newtheorem{definition}[equation]{Definition}
\newtheorem{hypothesis}[equation]{Hypothesis}
\newtheorem{thmintro}{Theorem}
\newtheorem*{theorem*}{Theorem}
\DeclareMathOperator\Ext{Ext}
\DeclareMathOperator\GL{GL}
\newcommand\kk{\Bbbk}
\newcommand\inv{^{-1}}
\newcommand\iso{\cong}
\newcommand\bq{\mathbf q}
\newcommand\bp{\mathbf p}
\newcommand\br{\mathbf r}
\renewcommand\mod{~\mathrm{mod}~}
\newcommand\cA{\mathcal A}
\newcommand\cB{\mathcal B}
\newcommand\cJ{\mathcal J}
\newcommand\cR{\mathcal R}
\newcommand\cS{\mathcal S}
\newcommand\NN{\mathbb N}
\newcommand\Aq{A_2(q)}
\newcommand\Ap{A_2(p)}
\newcommand\Aqinv{A_2(q\inv)}
\newcommand\Bp{B_2(p)}
\newcommand\Bq{B_2(q)}
\newcommand\Dq{D(q)}
\newcommand\Dp{D(p)}
\newcommand\Dqinv{D(q\inv)}
\newcommand\algJ{J}
\newenvironment{psmatrix}
  {\left(\begin{smallmatrix}}
  {\end{smallmatrix}\right)}
\title{Dimension two twisted graded Calabi--Yau algebras on two-vertex quivers}
\keywords{Graded twisted Calabi--Yau algebras, isomorphism problem, modified preprojective algebra}
\subjclass{16E65,16P90,16S38,16W50}
\author[Gaddis]{Jason Gaddis}
\author[Zazycki]{Daryl Zazycki}
\address{Department of Mathematics, Miami University, Oxford, Ohio 45056, USA} 
\email{gaddisj@miamioh.edu,zazyckdw@miamioh.edu}
\begin{document}

\begin{abstract}
We classify, up to isomorphism, twisted graded Calabi--Yau algebras of dimension two on two-vertex quivers. By work of Reyes and Rogalski, such algebras may be presented as quotients of translation quivers by mesh relations. We also consider the isomorphism problem for certain families of twisted graded Calabi--Yau algebras on larger quivers.
\end{abstract}

\maketitle

\section{Introduction}
\label{sec.intro}

Throughout, $\kk$ denotes an algebraically closed field of characteristic zero and $\kk^\times$ its group of units. All algebras may be assumed to be $\kk$-algebras. For a quiver $Q$, $\kk Q$ denotes its path algebra over $\kk$. 

Given a family of algebras $\cR = \{R_\gamma\}$, the \emph{isomorphism problem} asks for necessary and sufficient conditions for an isomorphism $R_\gamma \iso R_{\gamma'}$ between two elements of $\cR$. For example, if $\cR$ is the class of Artin--Schelter regular algebras of global dimension two, then up to isomorphism, $R \in \cR$ is one of the following:
\begin{align}\label{eq.jplane}
    \text{(Jordan plane)} \qquad \kk_J[x,y] &:= \kk\langle x,y\rangle/(xy-yx+x^2), \\
    \label{eq.qplane}
    \text{(quantum plane)} \qquad \kk_q[x,y] &:= \kk\langle x,y\rangle/(xy-qyx), \qquad q \in \kk^\times.
\end{align}
No quantum plane is isomorphic to the Jordan plane, and for 
$p,q \in \kk^\times$, $\kk_q[x,y] \iso \kk_p[x,y]$ if and only if $p=q^{\pm 1}$. 

This paper is concerned with a related isomorphism problem: the classification of twisted graded Calabi--Yau algebras of dimension two of the form $\kk Q/I$, where $Q$ is a (finite) quiver with two vertices and $I$ is a homogeneous ideal generated by quadratic elements. By work of Reyes and Rogalski, it is known that such algebras have a specific form $\cA(Q,\tau)$ where $Q$ is a translation quiver and $\tau$ is an injective linear map on $\kk Q_1$ \cite{RR1}. See Section \ref{sec.tgcy} for background on algebras of this form. These algebras are shown to fit into four families.

A key advance in the isomorphism problem for graded rings was a result of Bell and Zhang \cite{BZ1}. In that work, the authors proved that two connected $\NN$-graded $\kk$-algebras generated in degree one are isomorphic if and only if there is a graded isomorphism between them. An adaptation of their result to path algebras modulo homogeneous relations \cite{gadiso} will be applied here.

\subsection{The algebras and their isomorphisms}

The preprojective algebras of type $A$ are well-known Calabi--Yau algebras. The algebras below may be considered as twisted versions of these. In \cite{EE,kleiner}, they are referred to as \emph{modified preprojective algebras}.

\begin{definition}\label{defn.An}
(1) Let $Q$ be the quiver
\begin{equation}\label{eq.An}
\begin{tikzcd}
    &   &   & e_0 \arrow[llld, "a_0"] \arrow[rrrd, "a_{n-1}^*", shift left=2] 	&           &		& \\
e_1 \arrow[rr, "a_1"] \arrow[rrru, "a_0^*", shift left=2] 	&        
&  e_2 \arrow[r, "a_2"]	\arrow[ll, "a_1^*", shift left=2]		    & \cdots   \arrow[r, "a_{n-3}"]  \arrow[l, "a_2^*", shift left=2] &	e_{n-2} \arrow[rr, "a_{n-2}"]	\arrow[l, "a_{n-3}^*", shift left=2] &           	& e_{n-1} \arrow[lllu, "a_{n-1}"] \arrow[ll, "a_{n-2}^*", shift left=2]
\end{tikzcd}
\end{equation}
For $\bq = (q_0,\hdots,q_{n-1}) \in (\kk^\times)^n$, define
$A_n(\bq) := \kk Q/ (a_ia_i^* - q_i a_{i-1}^*a_{i-1} \quad \text{for $i = 0\hdots, n-1$})$.

(2) For $n \geq 2$, let $Q$ be the quiver
\begin{equation}\label{eq.Bn}
\begin{tikzcd}
    &   & e_0 \arrow[out=120,in=60,loop,"b_0"]\arrow[lld,"a_0"']   &   &   \\
e_1 \arrow[r,"a_1"'] \arrow[out=240,in=300,loop,swap,"b_1"] & 
e_2 \arrow[r,"a_2"'] \arrow[out=240,in=300,loop,swap,"b_2"] & 
\cdots \arrow[r,"a_{n-3}"'] & 
e_{n-2} \arrow[r,"a_{n-2}"'] \arrow[out=240,in=300,loop,swap,"b_{n-2}"] & e_{n-1} \arrow[llu,"a_{n-1}"'] \arrow[out=240,in=300,loop,swap,"b_{n-1}"]
\end{tikzcd}
\end{equation}
For $\bq = (q_0,\hdots,q_{n-1}) \in (\kk^\times)^n$,
define $B_n(\bq) = \kk Q/(b_ia_i - q_i a_i b_{i+1} \quad 
    \text{for $i = 0\hdots, n-1$})$.
\end{definition}

The algebras $A_n(\bq)$ and $B_n(\bq)$ are twisted graded Calabi--Yau (see Lemma \ref{lem.AnCY} and Lemma \ref{lem.BnCY}, respectively). The algebras $A_n(\bq)$ with $q_i=1$ for all $i$ are precisely the preprojective algebras of type $A$. When $n=1$, then $A_1(\bq) \iso \kk_{q_0}[x,y]$.

\begin{thmintro}[Theorems \ref{thm.An} and \ref{thm.Bn}]
\label{thm.iso-intro}
(1) Let $n \geq 3$ and $\bq,\bp \in (\kk^\times)^n$. 
Then $A_n(\bq)\iso A_n(\bp)$ if and only if there exists $\alpha_i \in \kk^\times$ and $0 \leq k < n$ such that 
\[
\text{(1)}~p_i = \frac{\alpha_{i+k-1}}{\alpha_{i+k}} q_{i+k} 
    \quad\text{for all $0\leq i < n$, or} \qquad
\text{(2)}~p_i = \frac{\alpha_{n-i-k}}{\alpha_{n-i-k-1}} q_{n-i-k}\inv
    \quad\text{for all $0\leq i < n$.}
\]

(2) Let $n \geq 2$ and $\bq,\bp \in (\kk^\times)^n$. Then $B_n(\bq) \iso B_n(\bp)$ if and only if there exists $\alpha_i \in \kk^\times$ and $0 \leq k < n$ such that $p_i = \frac{\alpha_{i+k+1}}{\alpha_{i+k}} q_{i+k}$ for all $0\leq i < n$.
\end{thmintro}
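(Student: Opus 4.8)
The plan is to reduce the isomorphism problem to a problem about \emph{graded} isomorphisms, and then to determine all such isomorphisms explicitly by combining a combinatorial analysis of quiver automorphisms with a bookkeeping computation that tracks how the defining relations transform. By the path-algebra version of the Bell--Zhang theorem in \cite{gadiso}, it suffices to classify graded isomorphisms $\phi\colon A_n(\bq)\to A_n(\bp)$ (resp.\ $B_n(\bq)\to B_n(\bp)$), since both families are connected $\NN$-graded and generated in degree one. Any such $\phi$ preserves the grading, so it restricts to an algebra isomorphism of the degree-zero parts $\kk e_0\oplus\cdots\oplus\kk e_{n-1}\iso\kk^n$; being an automorphism of a commutative semisimple algebra, it permutes the primitive idempotents and thereby induces a permutation $\sigma$ of the vertices. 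On the degree-one part $\phi$ sends each arrow to a linear combination of arrows compatible with $\sigma$. For $n\ge 3$ (in the $A_n$ case) and $n \ge 2$ (in the $B_n$ case) there is at most one arrow between any ordered pair of vertices in a fixed direction, so these arrow spaces are at most one-dimensional; hence $\sigma$ must be an automorphism of the underlying quiver and the action of $\phi$ on arrows is ``diagonal,'' given by nonzero scalars.

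Next I would classify the relevant quiver automorphisms. For $A_n$ with $n\ge 3$, forgetting the distinction between $a_i$ and $a_i^*$ leaves an $n$-cycle, whose automorphism group is dihedral of order $2n$. The $n$ rotations preserve the orientation of the cycle, hence send $a$-arrows to $a$-arrows and $a^*$-arrows to $a^*$-arrows; the $n$ reflections reverse the orientation, hence interchange the two families $a_i\leftrightarrow a_j^*$. These two types are the source of cases (1) and (2) of part (1), each parametrized by the shift $0\le k<n$. For $B_n$ the arrows $a_i$ form a \emph{directed} cycle and each $b_i$ is a loop; an automorphism must preserve arrow direction, so only the $n$ rotations survive and there are no reflections, which is why part (2) has a single family of solutions.

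Having fixed an automorphism $\sigma$, I would write the scalar action on arrows and impose that each defining relation of the source maps into the relation ideal of the target. For $B_n$, writing $\phi(a_i)=\lambda_i a_{\sigma(i)}$ and $\phi(b_i)=\alpha_i b_{\sigma(i)}$, relation-preservation forces a condition of the form $p_{\sigma(i)}=\tfrac{\alpha_{i+1}}{\alpha_i}q_i$ in which the $\lambda_i$ drop out entirely; after relabeling the shift and the scalars this is exactly $p_i=\tfrac{\alpha_{i+k+1}}{\alpha_{i+k}}q_{i+k}$. For $A_n$, writing $\phi(a_i)=\lambda_i a_{\sigma(i)}$ and $\phi(a_i^*)=\mu_i a_{\sigma(i)}^*$ in the rotation case, one finds that only the products $\alpha_i:=\lambda_i\mu_i$ enter the transformed relation, yielding condition (1); this explains why a single family $\alpha_i$ suffices. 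In the reflection case the interchange $a_i\leftrightarrow a_j^*$ turns $a_ia_i^*-q_ia_{i-1}^*a_{i-1}$ into a relation of the same shape but with the two arrow families swapped, so that expressing it in the target forces one to invert and reindex the parameters, producing the reversed index $n-i-k$ and the inverted parameter $q_{n-i-k}\inv$ of condition (2). Conversely, given scalars $\alpha_i$ and a shift $k$ satisfying the stated equation, the associated diagonal map is a well-defined graded isomorphism, as one checks by verifying that it carries relations to relations.

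The main obstacle I anticipate is twofold. First, rigorously justifying the ``arrow-rigidity'' step—that the induced vertex permutation is forced to be a quiver automorphism of the expected type—requires knowing the graded pieces precisely and ruling out degenerate permutations; this is exactly where the hypothesis $n\ge 3$ is needed for $A_n$, since for $n=2$ the pair $\{a_0,a_1^*\}$ gives a two-dimensional arrow space in a single direction and the automorphism group is strictly larger than dihedral. Second, the bookkeeping in the reflection case of $A_n$ is delicate: tracking the index reversal modulo $n$ together with the parameter inversion is error-prone, and one must confirm that the reflection axis and the shift $k$ combine to yield precisely the family in condition (2). The $B_n$ computation and the converse constructions are, by contrast, routine once this framework is in place.
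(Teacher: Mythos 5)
Your proposal is correct and follows essentially the same route as the paper: reduce to graded isomorphisms via the path-algebra Bell--Zhang theorem, use the schurian property to force a diagonal action on arrows over a dihedral (resp.\ cyclic) vertex permutation, and then extract the parameter conditions from relation-preservation, with the converse given by explicit diagonal/rotation/reflection isomorphisms. The only cosmetic difference is that the paper reduces nontrivial rotations and reflections to the vertex-fixing case by composing with the explicit maps $\psi^k$ and $\pi$, whereas you propose carrying the general permutation through the computation directly.
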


The quiver \eqref{eq.Bn} is \emph{schurian} (for vertices $i,j$, there is at most one arrow $i\to j$) and the quiver \eqref{eq.An} is schurian for $n \geq 3$. When $n=2$, the quiver \eqref{eq.An} is not schurian and the isomorphism problem is more involved. When $n=2$, both quivers support an additional type of algebra defined below.

\begin{definition}\label{defn.twovert}
(1) Let $Q$ be the quiver
\begin{equation}\label{eq.A2}
\begin{tikzcd}
e_1 \ar[rr,bend left,swap,"a"] \ar[rr,bend left,shift left=1ex,"c"] & & e_2 \ar[ll,bend left,swap,"b"] \ar[ll,bend left,shift left=1ex,"d"]
\end{tikzcd}
\end{equation}
For $q \in \kk^\times$, define
$\Aq = \kk Q/(ab-cd, ba-qdc)$ and 
$\Dq = \kk Q/(ab-(a+c)d,ba-qdc)$.

(2) Let $Q$ be the quiver
\begin{equation}\label{eq.B2}
\begin{tikzcd}
e_1 \ar[loop,in=210,out=150,looseness=4,swap,"a"] \ar[rr,bend left,"b"] & & 
e_2 \ar[loop,in=30,out=330,looseness=4,swap, "c"] \ar[ll,bend left,"d"]
\end{tikzcd}
\end{equation}
For $q \in \kk^\times$, define
$\Bq = \kk Q/(ab-bc, cd-qda)$ and 
$\algJ = \kk Q/(a^2-bd, c^2-db)$.
\end{definition}

The cases $\algJ$ and $\Dq$ are called \emph{exceptional} because they do not (obviously) belong to families on $n$-vertices as in the case of $\Aq$ and $\Bq$. The algebras $\algJ$ and $\Dq$ are shown to be twisted Calabi--Yau in Lemmas \ref{lem.CCY} and \ref{lem.DCY}, respectively. 

\begin{thmintro}[Theorem \ref{thm.class}]
\label{thm.2vert-intro}
For $q \in \kk^\times$, the algebras $\Aq$, $\Bq$, $\algJ$, and $\Dq$ are twisted graded Calabi--Yau. If $\cA=\cA(Q,\tau)$ is a twisted graded Calabi--Yau algebra of dimension two and finite Gelfand--Kirillov dimension with $Q$ strongly connected and $|Q_0|=2$, then $\cA$ is isomorphic to one of the $\Aq$, $\Bq$, $\algJ$, or $\Dq$. Moreover, these algebras are pairwise non-isomorphic except $\Aq \iso \Aqinv$ and $\Dq \iso \Dqinv$.
\end{thmintro}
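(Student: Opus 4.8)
The twisted graded Calabi--Yau claim requires no new work: for $\Aq$ and $\Bq$ it is the $n=2$ case of Lemmas~\ref{lem.AnCY} and~\ref{lem.BnCY}, while for the exceptional algebras $\algJ$ and $\Dq$ it is exactly Lemmas~\ref{lem.CCY} and~\ref{lem.DCY}. The substance of the theorem is therefore the classification together with the isomorphism count. For the classification I would begin from the structure theorem of Reyes and Rogalski \cite{RR1}: every such algebra is of the form $\cA = \cA(Q,\tau)$ for a translation quiver $Q$ and an injective linear map $\tau$ on $\kk Q_1$, with relations given by the (generalized) mesh relations determined by $\tau$. Writing $m_{ij}$ for the number of arrows $i\to j$, the mesh/bimodule-resolution balance forces the in-degree to equal the out-degree at each vertex, so with $|Q_0|=2$ one has $m_{12}=m_{21}=:r$ and loop counts $s_1,s_2$. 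Strong connectivity gives $r\ge 1$, and finite Gelfand--Kirillov dimension forces the Hilbert series, governed by $\bigl(I-Mt+t^2 I\bigr)\inv$ with $M=\begin{psmatrix} s_1 & r\\ r& s_2\end{psmatrix}$, to have all poles on the unit circle; this bounds the spectral radius of $M$ by $2$ and, together with the dimension-two (out-degree $2$) condition, leaves exactly the two underlying quivers \eqref{eq.A2} (with $s_1=s_2=0$, $r=2$) and \eqref{eq.B2} (with $s_1=s_2=1$, $r=1$).

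With the quiver shape fixed, the remaining data is the injective linear map $\tau$ on $\kk Q_1$, which respects the decomposition by source and target. Up to a base change of the arrows fixing the vertices, I would put $\tau$ into normal form on each homogeneous block: either $\tau$ is diagonalizable, yielding the generic families $\Aq$ and $\Bq$, or $\tau$ has a single nontrivial Jordan block, yielding the exceptional algebras $\Dq$ and $\algJ$. This dichotomy is the two-vertex analogue of the quantum-plane/Jordan-plane split in \eqref{eq.qplane}/\eqref{eq.jplane}, and reading off the resulting mesh relations recovers the four presentations of Definition~\ref{defn.twovert}.

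For the isomorphism statement I would invoke the Bell--Zhang reduction \cite{BZ1} in its path-algebra form \cite{gadiso}, so that it suffices to classify graded isomorphisms. The Gabriel quiver is a graded-isomorphism invariant, and since \eqref{eq.B2} has loops (equivalently $\Ext^1(S_i,S_i)\ne 0$) while \eqref{eq.A2} does not, no algebra on one quiver is isomorphic to one on the other; this separates $\{\Aq,\Dq\}$ from $\{\Bq,\algJ\}$. Within each quiver, the Jordan type of $\tau$ is likewise preserved, so $\Aq\not\iso\Dq$ and $\Bq\not\iso\algJ$ for all parameters. To finish, I would parametrize a generic graded isomorphism by its action on the arrows, impose that it carry relations to relations, and solve the resulting equations in the structure constants: the vertex-swapping quiver automorphism inverts the parameter, producing $\Aq\iso\Aqinv$ and $\Dq\iso\Dqinv$ and no other coincidences, whereas the asymmetric placement of $q$ in the loop relations of $\Bq$ admits no such inversion.

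I expect the main obstacle to lie in the non-schurian quiver \eqref{eq.A2}, where the two parallel arrows in each direction make both the normalization of $\tau$ and the solution of the relation-preservation equations genuinely involved; this is precisely the case the introduction flags as harder, and it is where the exceptional algebra $\Dq$ and the symmetry $\Dq\iso\Dqinv$ must be teased apart from $\Aq$.
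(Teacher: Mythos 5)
Your overall strategy coincides with the paper's: reduce to graded isomorphisms via \cite{BZ1,gadiso}, pin down the quiver from the spectral data of the adjacency matrix, normalize $\tau$, and then solve the relation-preservation equations arrow by arrow (this last part matches Propositions \ref{prop.A2} and \ref{prop.D} and Corollary \ref{cor.B2}). But there is a genuine error in how you organize the classification, stemming from suppressing the Nakayama permutation. The Hilbert series is $(I-Mt+Pt^2)\inv$, where $P$ is the permutation matrix of $\mu$, not $(I-Mt+t^2I)\inv$, and $P$ does real work twice. First, for $M=\begin{psmatrix}0 & 2 \\ 2 & 0\end{psmatrix}$ one must rule out $P=\begin{psmatrix}0 & 1 \\ 1 & 0\end{psmatrix}$; the paper does this (Lemma \ref{lem.M1}) by observing that $\det(I-Mt+Pt^2)$ then has roots off the unit circle, contradicting finite GK dimension. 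Second, and more seriously, on the quiver \eqref{eq.B2} the split between $\Bq$ and $\algJ$ is \emph{not} a diagonalizable-versus-Jordan-block dichotomy for $\tau$: that quiver is schurian, so each block $e_iVe_j$ of $\kk Q_1$ is one-dimensional and $\tau$ acts by a nonzero scalar on each -- there is no Jordan block available. What separates the two cases is whether $\mu$ is the identity ($\algJ$) or the transposition ($\Bq$), and this is also the invariant the paper uses to conclude $\Bq\not\iso\algJ$ (Lemma \ref{lem.M2} and the proof of Theorem \ref{thm.class}). Your claim that ``the Jordan type of $\tau$ is preserved'' therefore cannot be the reason $\Bq\not\iso\algJ$; as written, that step fails.

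The Jordan-type dichotomy you describe is correct only on the quiver \eqref{eq.A2}, where $P=I$ is forced and the relevant matrix is the composite $\alpha\beta$ of the two off-diagonal blocks of $\tau$ (diagonalizable gives $\Aq$, a nontrivial Jordan block gives $\Dq$), exactly as in Lemma \ref{lem.M1}. A smaller point: ``in-degree equals out-degree'' and ``out-degree $2$'' are conclusions rather than inputs; the paper derives $M=M^T$ and $\rho(M)=2$ from \cite[Theorem 7.8]{RR1} together with $MP=PM$, and then solves $e_+=2$ over the nonnegative integers before discarding the non--strongly-connected solutions. With the role of $P$ restored, the rest of your outline goes through.
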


This paper is laid out as follows. Section \ref{sec.background}, gives definitions for quivers, path algebras, and twisted graded Calabi--Yau algebras. Here it is established that the algebras introduced above are twisted Calabi--Yau and Theorem \ref{thm.iso-intro} is proved. Section \ref{sec.twovert} is devoted to the proof of Theorem \ref{thm.2vert-intro}.

\subsection*{Acknowledgements}
Zazycki was supported by the Undergraduate Summer Scholars program at Miami University.

\section{Background}
\label{sec.background}

An algebra $R$ is \emph{$\NN$-graded} (or just \emph{graded}) if there is a vector space decomposition $R = \bigoplus_{k \geq 0} R_k$ such that $R_k \cdot R_\ell \subset R_{k+\ell}$. If $R$ is graded, the \emph{Hilbert series} of $R$ is $h_R(t)=\sum_{k \geq 0} \dim_\kk(R_k) t^k$.

\subsection{Quivers and path algebras}
\label{sec.qpa}

A \emph{quiver} $Q=(Q_0,Q_1,s,t)$ is a directed graph consisting of a vertex set $Q_0$, an arrow set $Q_1$, and maps $s,t:Q_1 \to Q_0$ that identify for each arrow $a \in Q_1$ its \emph{source} $s(a)$ and its \emph{target} $t(a)$. Both $Q_0$ and $Q_1$ are assumed to be finite. 
The \emph{adjacency matrix} of $Q$ is $M=(m_{ij})$ where $m_{ij}$ denotes the number of arrows with source $i$ and vertex $j$. 

A \emph{path} in $Q$ is a sequence, written $p=a_0a_1\cdots a_n$, such that $s(a_i) = t(a_{i-1})$ for $i=1,\hdots,n$. The source and target maps extend to the set of paths by $s(p)=s(a_0)$ and $t(p)=t(a_n)$. The quiver $Q$ is \emph{strongly connected} if for any vertices $i,j$, there exists a path $p$ with $s(p)=i$ and $t(p)=j$.
The quiver is \emph{schurian} if for any vertices $i,j$, there is at most one arrow $a$ with $s(a)=i$ and $s(a)=j$.

The \emph{path algebra} $\kk Q$ has a $\kk$-vector space basis consisting of paths in $Q$ and multiplication of paths $p$ and $q$ is defined by the concatenation $pq$ assuming $t(p)=s(q)$, and otherwise the product is set to $0$. At each vertex $v \in Q_0$ there is a trivial loop $e_v$ which satisfies $e_v p = p$ if $s(p)=v$ and is $0$ otherwise, and $pe_v = p$ if $t(p)=v$ and $0$ otherwise.

\subsection{Graded twisted Calabi--Yau algebras}

The Calabi--Yau condition for algebras was introduced by Ginzburg \cite{ginz}. The study of this condition is intimately connected to mirror symmetry, noncommutative projective algebraic geometry, and differential operator rings.

The \emph{enveloping algebra} of an algebra $R$ is $R^e := R \otimes R^{\mathrm{op}}$. A $\kk$-central $(R,R)$-bimodule $M$ is a left $R^e$-module where the action is given by $(r \otimes s)\cdot m = rms$ for all $m \in M$ and $r,s \in R$. The symbol $\delta_{ij}$ denotes the Kronecker-delta function.

\begin{definition}\label{defn.CY}
An algebra $R$ is said to be \emph{homologically smooth (over $\kk$)} if $R$ has a finite-length resolution by finitely generated projective $R^e$-modules. The algebra $R$ is \emph{twisted Calabi--Yau} of dimension $d$ if $R$ is homologically smooth and there exists an invertible $(R,R)$-bimodule $U$ such that there are right $R^e$-module isomorphisms $\Ext_{R^e}^i(R,R^e) \iso \delta_{id} U$. If $U=R$, then $R$ is \emph{Calabi--Yau}.
\end{definition}

In this paper, $U=R$ or $U=R^\sigma$, the twist of $R$ by an automorphism $\sigma$. In this case, $\sigma$ is called the \emph{Nakayama automorphism} of $R$.

Examples of Calabi--Yau algebras include polynomial rings, the $n$th Weyl algebra, and the preprojective algebra of non-Dynkin quiver. By \cite{RRZ}, a connected $\NN$-graded algebra is twisted Calabi--Yau if and only if it is Artin--Schelter regular.

\subsection{Twisted graded Calabi--Yau algebras of dimension two}
\label{sec.tgcy}

Let $Q$ be a (finite) quiver with $|Q_0|=n$ in which all arrows are weighted with degree one. Let $V=\kk Q_1$, let $\mu$ be a permutation of $Q_0$, and let $\tau:V \to V$ be a bijective $\kk$-linear graded map such that $\tau(e_i V e_j) \subset e_{\mu\inv(j)} V e_i$ for all $i,j,d$. Set $\omega= \sum_{x \in Q_1} \tau(x)x$. Then
\[ \cA(Q,\tau) = \kk Q/(\omega) = \kk Q/(\omega_1,\hdots,\omega_n)\]
where
\[ \omega_i = \omega e_i = e_{\mu\inv(i)} \omega e_i = \sum_{x \in \cB \cap \kk Qe_i} \tau(x)x.\]

Let $M$ denote the adjacency matrix of $Q$ and $P$ the permutation matrix defined by $P_{ij}=\delta_{\mu(i)j}$. The \emph{matrix-valued Hilbert series} of $\cA=\cA(Q,\tau)$ is $h_{\cA}(t) = \sum_{k \geq 0} H_k t^k$ where $H_k \in M_n(\NN)$ and $(H_k)_{ij}$ records the number of paths (modulo relations) from vertex $i$ to vertex $j$. By \cite[Lemma 7.6]{RR1}, $\cA=\cA(Q,\tau)$ is twisted Calabi--Yau of dimension two if and only if $h_\cA(t) = (p(t))\inv$ where $p(t)=I-Mt+Pt^2$. This fact will be used frequently without further comment.

\begin{lemma}\label{lem.AnCY}
For $\bq \in (\kk^\times)^n$, the algebras $A_n(\bq)$ are twisted Calabi--Yau.
\end{lemma}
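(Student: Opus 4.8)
The plan is to present $A_n(\bq)$ as an algebra of the form $\cA(Q,\tau)$ and then apply the Hilbert-series criterion \cite[Lemma 7.6]{RR1}. Writing $V=\kk Q_1$ for the quiver $Q$ of \eqref{eq.An}, I would define the bijective graded linear map $\tau\colon V\to V$ on the basis of arrows by $\tau(a_i^*)=a_i$ and $\tau(a_i)=-q_{i+1}a_i^*$ (indices mod $n$). Since $a_i\in e_{i+1}Ve_i$ and $a_i^*\in e_iVe_{i+1}$, one checks that $\tau(e_iVe_j)\subset e_jVe_i$ for all $i,j$, i.e.\ that $\tau$ satisfies the required intertwining condition with permutation $\mu=\mathrm{id}$. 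Forming $\omega=\sum_{x\in Q_1}\tau(x)x$ and collecting $\omega_i=\omega e_i$ then recovers exactly the defining relations $a_ia_i^*-q_ia_{i-1}^*a_{i-1}$, so that $A_n(\bq)=\cA(Q,\tau)$.

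Because $\mu=\mathrm{id}$, the permutation matrix is $P=I$, while the adjacency matrix $M$ of $Q$ is that of the doubled $n$-cycle, with $M_{ij}=1$ exactly when $j\equiv i\pm1\pmod n$. Hence $p(t)=I-Mt+Pt^2=(1+t^2)I-tM$, which is independent of $\bq$. By \cite[Lemma 7.6]{RR1}, it now suffices to prove the matrix identity $h_\cA(t)=p(t)\inv$.

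The key point is that the matrix-valued Hilbert series $h_\cA(t)$ is independent of $\bq$. Fixing a degree-compatible monomial order on $\kk Q$ in which the leading monomial of $\omega_i$ is $a_ia_i^*$, I would observe that two such leading monomials never overlap: an overlap of $a_ia_i^*$ with $a_ja_j^*$ would force $a_i^*=a_j$, which is impossible since starred and unstarred arrows are distinct. Thus $\{\omega_0,\hdots,\omega_{n-1}\}$ has no ambiguities, so Bergman's diamond lemma shows it is a Gr\"obner basis and that the paths containing no subpath $a_ia_i^*$ form a $\kk$-basis of $A_n(\bq)$. This basis, and hence every entry $(H_k)_{ij}$, is manifestly independent of $\bq$.

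It therefore suffices to verify $h_\cA(t)=p(t)\inv$ for the single value $\bq=(1,\hdots,1)$, where $A_n(1,\hdots,1)$ is the preprojective algebra of the $n$-cycle (extended Dynkin type $\tilde{A}_{n-1}$). This algebra is Calabi--Yau of dimension two, hence twisted Calabi--Yau of dimension two, so \cite[Lemma 7.6]{RR1} gives $h_{A_n(1,\hdots,1)}(t)=p(t)\inv$; combined with the previous paragraph this yields the identity for every $\bq$, and the criterion concludes that each $A_n(\bq)$ is twisted Calabi--Yau. I expect the genuine content to lie in this last identity rather than in the overlap check, which is uniform and short: one must confirm $h(t)=p(t)\inv$ for $\bq=(1,\hdots,1)$, either by invoking the known Calabi--Yau property of the $\tilde{A}_{n-1}$ preprojective algebra or, self-containedly, by counting the normal-form paths directly and verifying $h(t)\,p(t)=I$.
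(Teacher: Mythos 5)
Your proof is correct, and its first half coincides exactly with the paper's: the same $\tau$, the same computation of $\omega_i = a_ia_i^* - q_ia_{i-1}^*a_{i-1}$, and the same reduction to the Hilbert-series criterion of \cite[Lemma 7.6]{RR1}. Where you genuinely diverge is in how the identity $h_{A_n(\bq)}(t)=(I-Mt+It^2)\inv$ is established. The paper disposes of this in one line by citing \cite[Theorem 3.5.1]{EE}, which computes the Hilbert series of the modified preprojective algebras directly. You instead note that the leading monomials $a_ia_i^*$ admit no overlap or inclusion ambiguities (an overlap would force some $a_i^*$ to equal some $a_j$), so by the Diamond Lemma the defining relations are already a Gr\"obner basis and the normal-form paths---hence the matrix-valued Hilbert series---are independent of $\bq$; you then transfer the identity from the untwisted case $\bq=(1,\hdots,1)$, the preprojective algebra of the non-Dynkin quiver $\tilde{A}_{n-1}$, whose Calabi--Yau property the paper itself records as known. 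This is a legitimate, self-contained alternative: it replaces an external Hilbert-series computation with a short confluence check plus a standard fact, and it makes transparent why the answer cannot depend on $\bq$ (a point the paper leaves implicit in the citation). The only cosmetic slip is the membership $a_i\in e_{i+1}Ve_i$, which with the paper's conventions should read $a_i\in e_iVe_{i+1}$ and $a_i^*\in e_{i+1}Ve_i$; the verification of the intertwining condition for $\mu=\mathrm{id}$ goes through unchanged.
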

\begin{proof}
Let $Q$ denote the quiver in \eqref{eq.An}. Let $\mu=\mathrm{id}$ and define $\tau$ by $\tau(a_i) = -q_{i+1} a_i^*$ and $\tau(a_i^*) = a_i$
%\[ \tau(a_i) = -q_{i+1} a_i^* \qquad \tau(a_i^*) = a_i\]
extended linearly so that $\tau:V \to \kk Q$ is an injective $\kk$-linear graded map. It is clear that $W=\tau(V)$ is again an arrow space for $\kk Q$, so $A_n(\bq) = \cA(Q,\tau)$. Let $\omega = \sum_{x \in Q_1} \tau(x)x$ so that 
\[ \omega_i = e_i \omega e_i = \tau(a_i^*)a_i^* + \tau(a_{i-1})a_{i-1}
	= a_ia_i^* - q_i a_{i-1}^*a_{i-1}.\]
Now $h_{A_n(\bq)} = (I-Mt + It^2)\inv$ by \cite[Theorem 3.5.1]{EE}.
\end{proof}

Going forward, it will be necessary to use Diamond Lemma \cite{berg_diamond} arguments to obtain the Hilbert series of these algebras.

\begin{lemma}\label{lem.BnCY}
For $\bq \in (\kk^\times)^n$, the algebras $B_n(\bq)$ are twisted Calabi--Yau.
\end{lemma}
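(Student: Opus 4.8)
The plan is to mimic the proof of Lemma~\ref{lem.AnCY}, realizing each $B_n(\bq)$ as an algebra of the form $\cA(Q,\tau)$ and then computing its matrix-valued Hilbert series to verify the criterion $h_\cA(t) = (I - Mt + Pt^2)\inv$ from \cite[Lemma 7.6]{RR1}. Let $Q$ be the quiver in \eqref{eq.Bn}. The defining relations are $b_i a_i - q_i a_i b_{i+1}$ for $0 \le i < n$ (indices of the loops taken modulo $n$), and I must exhibit a permutation $\mu$ of $Q_0$ and a bijective graded $\kk$-linear map $\tau : V \to V$ with $\tau(e_i V e_j) \subset e_{\mu\inv(j)} V e_i$ so that the single element $\omega = \sum_{x \in Q_1} \tau(x)x$ has components $\omega_i$ generating exactly these relations.

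First I would determine $\mu$ and $\tau$. Writing each relation $\omega_i = b_i a_i - q_i a_i b_{i+1}$ as $\tau(a_i)a_i + \tau(b_i)b_i$ (up to reindexing which arrow plays the role of the ``last'' arrow into each vertex), I would read off the values of $\tau$ on the generators $a_i$ and $b_i$. Since each relation is a combination of a path through a loop and a path through an arrow $a_i$, the natural choice sends $a_i$ to a scalar multiple of a loop and $b_i$ to a scalar multiple of $a_i$, with the scalars $-q_i$ and $1$ chosen to reproduce the coefficients. I would check that $\mu$ is the appropriate cyclic shift (or the identity) forced by the source–target bookkeeping $\tau(e_iVe_j)\subset e_{\mu\inv(j)}Ve_i$, and that $\tau$ so defined is a bijection on $V=\kk Q_1$, so that $W=\tau(V)$ is again an arrow space and $B_n(\bq) = \cA(Q,\tau)$.

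Next I would compute the Hilbert series. The adjacency matrix $M$ records one loop at each vertex plus the arrows $a_i$ forming an $n$-cycle, and $P$ is the permutation matrix of $\mu$. Rather than invoking an external computation as in Lemma~\ref{lem.AnCY}, I would run a Diamond Lemma argument (as the text flags immediately before this statement): orienting the relations $b_i a_i \to q_i a_i b_{i+1}$ gives a rewriting system whose only overlap ambiguities come from chains of the form $b_i a_i b_{i+1}$-type monomials, and I would verify these are resolvable so that the reduced words form a $\kk$-basis. Counting reduced paths from vertex $i$ to vertex $j$ of each length then yields $h_\cA(t)$ directly, and I would confirm it equals $(I - Mt + Pt^2)\inv$.

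The main obstacle I anticipate is the Diamond Lemma verification, specifically confirming that the orientation of the relations gives a terminating, confluent rewriting system and correctly identifying the normal forms. Because the loops $b_i$ do not square to anything in the relations (unlike the $a_i a_i^*$ terms of the type-$A$ case), I expect the normal forms to be words in which every loop has been pushed to a canonical side, and the bookkeeping of which vertex each loop sits at must be tracked carefully around the cycle. Once the basis of normal forms is pinned down, extracting the matrix entries $(H_k)_{ij}$ and matching them against the power series expansion of $(I-Mt+Pt^2)\inv$ should be routine linear algebra, so the crux is the confluence check and the resulting path count.
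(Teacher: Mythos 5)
Your proposal follows essentially the same route as the paper: realize $B_n(\bq)$ as $\cA(Q,\tau)$ with $\tau(a_i)=b_i$, $\tau(b_i)=-q_{i-1}a_{i-1}$ and $\mu$ the cyclic shift (so $M=I+P$), expand $(I-Mt+Pt^2)\inv$ via the recursion $H_{k+1}=MH_k-PH_{k-1}$ to get $H_k=\sum_{i=0}^k P^i$, and then count normal-form paths $a_i a_{i+1}\cdots a_{j-1}b_j^\ell$ obtained by pushing loops to the right with the relations. The only refinement worth noting is that the rewriting system $b_ia_i \to q_i a_ib_{i+1}$ in fact has no overlap ambiguities at all (every leading word is loop-then-arrow, so no two can overlap), which makes the confluence check you flag as the crux entirely vacuous.
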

\begin{proof}
Let $Q$ denote the quiver in \eqref{eq.Bn}. Define $\tau$ by $\tau(a_i) = b_i$ and $\tau(b_i) = -q_{i-1} a_{i-1}$
%\[ \tau(a_i) = b_i \qquad \tau(b_i) = -q_{i-1} a_{i-1}\]
extended linearly so that $\tau:V \to \kk Q$ is an injective $\kk$-linear graded map. It is clear that $W=\tau(V)$ is again an arrow space for $\kk Q$. So, $B_n(\bq) \iso \cA(Q,\tau)$. Let $\omega = \sum_{x \in Q_1} \tau(x)x$ so that 
\[ \omega_i = e_i \omega e_{i+1} = \tau(a_i)a_i + \tau(b_{i+1})b_{i+1}
	= b_ia_i - q_ia_ib_{i+1}.\]

In this case, $\mu(e_i)=e_{i+1}$ and so $P$ is the permutation matrix with $P_{j-1,j}=1$ and $P_{ij}=0$ with $i \neq j-1$. Then the adjacency matrix of $Q$ satisfies $M=I+P$. Write $(1-Mt+Pt^2)\inv = \sum_{k=0} H_k t^k$. The coefficients satisfy the recursive formula $H_{k+1} = MH_k - PH_{k-1}$. Note that $H_0 = I$ and $H_1 = M = I+P$. Suppose $H_k = \sum_{i=0}^k P^i$, then inductively
\[ H_{k+1} 
    = M \sum_{i=0}^k P^i - P \sum_{i=0}^{k-1} P^i
    = (I+P) \sum_{i=0}^k P^i - \sum_{i=0}^{k} P^i + I
    = P \sum_{i=0}^k P^i + I
    = \sum_{i=0}^{k+1} P^i.\]

Using the relations, any path of length $k$ from vertex $i$ to vertex $j$ can be written in the form $a_i a_{i+1} \cdots a_{j-1}b_j^\ell$ where $a_i a_{i+1} \cdots a_{j-1}$ has length $d$ and $d+\ell=k$. Multiplying each $a_i a_{i+1} \cdots a_{j-1}b_j^\ell$ by $b_j$ on the right gives a length $k+1$ path from vertex $i$ to vertex $j$. There is one additional path, composed only of the $a_i$, if and only if $k+1 \equiv j \mod n$ if and only if $(P^{k+1})_{ij} \neq 0$. It follows that $h_{B_n(\bq)}=(I-Mt+Pt^2)\inv$ as computed above.
\end{proof}

An alternative approach to Lemmas \ref{lem.AnCY} and \ref{lem.BnCY} would be to realize the $A_n(\bq)$ and $B_n(\bq)$ are Ore extensions. The next two results establish the twisted Calabi--Yau condition for the exceptional cases.

\begin{lemma}\label{lem.CCY}
The algebra $\algJ$ is twisted Calabi--Yau.
\end{lemma}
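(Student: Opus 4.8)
The plan is to exhibit $\algJ$ as an algebra of the form $\cA(Q,\tau)$ and then apply the Hilbert series criterion \cite[Lemma 7.6]{RR1}. Let $Q$ be the quiver \eqref{eq.B2}, take $\mu = \mathrm{id}$, and define $\tau\colon V \to V$, where $V = \kk Q_1$, by $\tau(a) = a$, $\tau(c) = c$, $\tau(b) = -d$, and $\tau(d) = -b$, extended linearly. This map is graded and bijective (in fact $\tau^2 = \mathrm{id}$), and since it reverses each arrow it satisfies $\tau(e_i V e_j) \subset e_j V e_i$; hence $W = \tau(V)$ is again an arrow space and $\algJ = \cA(Q,\tau)$. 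Computing $\omega = \sum_{x \in Q_1} \tau(x) x$ recovers the defining relations, as
\[ \omega_1 = e_1 \omega e_1 = \tau(a) a + \tau(d) d = a^2 - bd, \qquad \omega_2 = e_2 \omega e_2 = \tau(c) c + \tau(b) b = c^2 - db. \]

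With this in place, the adjacency matrix is $M = \begin{psmatrix} 1 & 1 \\ 1 & 1 \end{psmatrix}$ and $P = I$ (as $\mu = \mathrm{id}$), so $p(t) = I - Mt + It^2$ has determinant $(1-t)^2(1+t^2)$ and
\[ p(t)\inv = \frac{1}{(1-t)^2(1+t^2)} \begin{psmatrix} 1-t+t^2 & t \\ t & 1-t+t^2 \end{psmatrix}. \]
By \cite[Lemma 7.6]{RR1} it then suffices to prove that the matrix-valued Hilbert series $h_\algJ(t)$ coincides with $p(t)\inv$.

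To compute $h_\algJ(t)$ I would use a Diamond Lemma \cite{berg_diamond} argument, and this is where the main obstacle lies. Orienting the relations by the degree-lexicographic order with $a > b > c > d$ gives the rules $a^2 \to bd$ and $c^2 \to db$, but these do \emph{not} form a confluent system: the overlap $a^3$ produces the critical pair $(bda,\, abd)$ and the overlap $c^3$ the pair $(cdb,\, dbc)$, and in each case the two branches are distinct irreducible words. Resolving these forces the additional rules $abd \to bda$ and $cdb \to dbc$. I would then verify that the completed system $\{a^2 \to bd,\ c^2 \to db,\ abd \to bda,\ cdb \to dbc\}$ is confluent: beyond the two overlaps just described, the only remaining ambiguities are the words $aabd$ and $ccdb$, which reduce along either branch to $bdbd$ and $dbdb$ respectively. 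Establishing termination and checking that every critical pair resolves is the technical heart of the argument.

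Once confluence is confirmed, the Diamond Lemma identifies a $\kk$-basis of $\algJ$ with the paths in $Q$ avoiding the factors $a^2$, $c^2$, $abd$, and $cdb$. To finish I would count these normal-form paths according to their source and target vertices; viewing them as walks in $Q$ with the corresponding transitions forbidden makes this a routine transfer-matrix count. The vertex-swapping symmetry $e_1 \leftrightarrow e_2$, $a \leftrightarrow c$, $b \leftrightarrow d$ preserves the relations, so the two diagonal entries of $h_\algJ(t)$ agree, as do the two off-diagonal entries, which already matches the symmetric shape of $p(t)\inv$. A direct tally of the paths of each length (for instance, the total dimension in degree $k$ comes out to $2(k+1)$, and the four vertex-to-vertex counts agree with the coefficients of the entries of $p(t)\inv$) completes the verification, and \cite[Lemma 7.6]{RR1} then yields that $\algJ$ is twisted Calabi--Yau of dimension two.
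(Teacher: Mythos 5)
Your proposal is correct and follows essentially the same route as the paper: realize $\algJ$ as $\cA(Q,\tau)$ with the same $\tau$, invert $I-Mt+It^2$, and verify the Hilbert series by a Diamond Lemma computation, leaving the final path count as a routine tally (as the paper also largely does). The only difference is the choice of monomial order ($a>b>c>d$ versus the paper's $d>c>b>a$), which yields the completed system $\{a^2\to bd,\ c^2\to db,\ abd\to bda,\ cdb\to dbc\}$ and a different but equally valid normal-form basis; your identification of the overlaps and their resolutions checks out.
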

\begin{proof}
Let $Q$ be the quiver in \eqref{eq.B2}. Then $\algJ=\cA(Q,\tau)$ where
\[ 
    \tau(a) = a, \qquad
    \tau(b) = -d, \qquad
    \tau(c) = c, \qquad
    \tau(d)= -b.
\]
Then
\[ \omega = \tau(a)a + \tau(b)b + \tau(c)c + \tau(d)d
    = a^2 - db + c^2 - bd.\]
So, $\mu=\mathrm{id}_{Q_0}$ and $P=I$.  

Write $(I-Mt+It^2)\inv = \sum_{k=0} H_k t^k$. An easy induction shows that for all $k \in \NN$:
\[
H_k = \begin{cases}
    \begin{psmatrix}2i+1 & 2i+2 \\ 2i+2 & 2i+1\end{psmatrix}
        & \text{ if $k=4i+2$} \\
    \begin{psmatrix}2i+1 & 2i \\ 2i & 2i+1\end{psmatrix}
        & \text{ if $k=4i$} \\
    \begin{psmatrix}i+1 & i+1 \\ i+1 & i+1\end{psmatrix}
        & \text{ if $k=2i+1$}.
\end{cases}
\]

Consider the ordering of variables $d>c>b>a$. Hence, the leading terms of the relations are $bd$ and $db$. There are two overlaps to resolve:
\begin{align*}
    a^2b &= (bd)b = b(db) = bc^2, \\
    c^2d &= (db)d = d(bd) = da^2.
\end{align*}
This implies that there are additional relations $a^2b-bc^2$ and $c^2d-da^2$ (with leading terms $bc^2$ and $da^2$, respectively). The remaining ambiguities $dbc^2$ and $bda^2$ resolve.

Suppose a path begins at $e_1$. By symmetry, the argument is identical for paths that begin at $e_2$. Then every path is has one of the following forms:
\[ a^\ell (bcda)^{\ell'}, \quad
   a^\ell (bcda)^{\ell'} bcd, \quad 
   a^\ell (bcda)^{\ell'} b,  \quad
   a^\ell (bcda)^{\ell'} bc.\]
Counting these paths will establish that $h_\algJ = (I-Mt+It^2)\inv$ as computed above. One example of how to do this is below and the remainder are left to the reader.

Suppose a path $p$ of length $4i$ has source and target $e_1$. Then that path has either the form $a^\ell (bcda)^{\ell'}$ with $\ell+4\ell' = k$ or of the form $a^\ell (bcda)^{\ell'} bcd$ where $\ell+4\ell' + 3=k$. If $k=4i$ or $k=4i+1$ for some $i$, then there are exactly $i+1$ of the first type and $i$ of the second type. 

On the other hand, if $k=2i+1$ for some $i$, then the argument depends on the parity of $i$. If $i$ is even, then there are $(i/2)+1$ paths of the first type and $i/2$ paths of the second type. If $i$ is odd, then there are $(i+1)/2$ paths of the first type and $(i+1)/2$ paths of the second type.
\end{proof}

%Now it follows from \cite[Lemma 7.6]{RR1} that $\algJ$ is twisted Calabi--Yau.

\begin{lemma}\label{lem.DCY}
Let $q \in \kk^\times$. The algebra $\Dq$ is twisted Calabi--Yau.
\end{lemma}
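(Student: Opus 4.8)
The plan is to apply the criterion \cite[Lemma 7.6]{RR1}: it suffices to show $h_{\Dq}(t)=p(t)\inv$ with $p(t)=I-Mt+Pt^2$. First I would identify the data $\cA(Q,\tau)$. Matching $\omega=\sum_{x\in Q_1}\tau(x)x$ against $\omega_1=ab-ad-cd$ and $\omega_2=ba-qdc$ forces
\[
 \tau(a)=b,\qquad \tau(b)=a,\qquad \tau(c)=-qd,\qquad \tau(d)=-(a+c),
\]
which satisfies the bimodule condition with $\mu=\mathrm{id}_{Q_0}$, so $P=I$ and $M=\begin{psmatrix}0&2\\2&0\end{psmatrix}$. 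Then $\det p(t)=(1-t^2)^2$ and
\[
 p(t)\inv=\frac{1}{(1-t^2)^2}\begin{psmatrix}1+t^2 & 2t\\ 2t & 1+t^2\end{psmatrix},
\]
so that the target coefficients are $H_{2m}=(2m+1)I$ and $H_{2m+1}=2(m+1)\begin{psmatrix}0&1\\1&0\end{psmatrix}$. Thus the whole problem reduces to counting paths modulo relations and matching these $H_k$.

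For the count I would run a Diamond Lemma \cite{berg_diamond} argument as in Lemma \ref{lem.CCY}, but with a crucial change of monomial order. A naive degree-lexicographic order fails here: the Jordan-type term $ad$ in $\omega_1$ is not separated from $ab$ and $cd$, and resolving $aba,\,abab,\,\dots$ produces an unbounded cascade of new relations. Instead I would weight the arrows by declaring $a,d$ to have weight $0$ and $b,c$ to have weight $1$, order paths first by total weight and break ties by degree-lex with $a>b>c>d$. With this order both relations are weight-homogeneous except for the strictly lighter tail $ad$, and the leading terms are $ab$ and $ba$, giving reductions $ab\to ad+cd$ and $ba\to q\,dc$. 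Resolving the two self-overlaps yields exactly two further rules,
\[
 adc\ \to\ q\inv(ada+cda),\qquad bcd\ \to\ q\,dcb-q\,dcd,
\]
and I would then check that the remaining overlaps $badc$ and $abcd$ both resolve (each reduces to $q\,dcdc$ and to $q\inv(adad+cdad)+cdcd$, respectively, from either side). This makes $\{ab,ba,adc,bcd\}$ the complete set of leading terms.

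Finally I would describe the normal forms as the paths avoiding the subwords $ab,ba,adc,bcd$ and count them by parity and by source/target, matching $H_k$; as in Lemma \ref{lem.CCY} I would present one representative count and leave the parallel cases to the reader. Conceptually, the weight order realizes a filtration whose associated graded is $\Aq$ (the top-weight forms of the relations are $ab-cd$ and $ba-qdc$), so the normal-form monomials—and hence the Hilbert series—coincide with those of $\Aq$. The main obstacle is precisely the choice of order: once the weight refinement is in place the rewriting system is finite and the verification is routine, whereas any order that fails to isolate the lower-weight term $ad$ leads to a non-terminating family of relations. With $h_{\Dq}(t)=p(t)\inv$ established, \cite[Lemma 7.6]{RR1} gives that $\Dq$ is twisted Calabi--Yau.
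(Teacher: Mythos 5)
Your proposal is correct and follows the same overall framework as the paper --- identify $\tau$, note $P=I$, compute $(I-Mt+It^2)\inv$, and verify the Hilbert series by a Diamond Lemma normal-form count --- but it executes the rewriting step along a genuinely different and more laborious path. The paper simply takes the plain degree-lexicographic order with $c>b>d>a$, under which the leading terms of $ab-(a+c)d$ and $ba-qdc$ are $cd$ and $ba$; these have \emph{no} overlaps at all, so the normal forms $(ad)^i(cb)^j$ and $(ad)^i(ab)(cb)^{j}$ are read off immediately and counted. This directly contradicts your motivating claim that ``a naive degree-lexicographic order fails here'': the cascade you describe only occurs for orders (such as $a>b>c>d$) that make $ab$ the leading term of the first relation, and is avoided by reordering the generators rather than by introducing weights. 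That said, your weighted order is legitimate (weight is additive and, since the relations are homogeneous, the deg-lex tiebreak is multiplicative), your two new rules $adc\to q\inv(ada+cda)$ and $bcd\to q\,dcb-q\,dcd$ are exactly what the overlaps $aba$ and $bab$ produce (these are overlaps between the two distinct rules $ab$ and $ba$, not self-overlaps), the remaining ambiguities $badc$ and $abcd$ do resolve to the values you state, and the paths avoiding $\{ab,ba,adc,bcd\}$ --- namely $(cd)^i(ad)^j$ and $(cd)^i(cb)^j$ with an optional trailing arrow --- are counted by the stated $H_k$. What your route buys is the pleasant structural observation that the top-weight parts of the relations are those of $\Aq$, so the normal monomials (hence the Hilbert series) agree with those of $\Aq$; but as a standalone proof of the Hilbert series this remark would still require knowing $h_{\Aq}$ and that the associated graded acquires no extra relations, so the explicit count remains the load-bearing step, and the paper's choice of order makes that step essentially trivial.
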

\begin{proof}
Let $Q$ be the quiver in \eqref{eq.A2}.
Then $\Dq = \cA(Q,\tau)$ where
\[ 
    \tau(a)=b, \qquad
    \tau(b)=a, \qquad
    \tau(c)=-qd, \qquad
    \tau(d)=-(a+c).
\]
Then
\[ \omega = \tau(a)a + \tau(b)b + \tau(c)c + \tau(d)d
    = ba + ab - qdc - (a+c)d.\]
So, $\mu=\mathrm{id}_{Q_0}$ and $P=I$. 

Write $(I-Mt+
It^2)\inv=\sum H_k t^k$. 
Inductively, it is easy to show that 
\[
H_k = \begin{cases}
    \begin{psmatrix}k+1 & 0 \\ 0 & k+1\end{psmatrix}
         & \text{ if $k$ is even} \\
    \begin{psmatrix}0 & k+1 \\ k+1 & 0\end{psmatrix}
         & \text{ if $k$ is odd.}
\end{cases}
\]

Consider the ordering of variables $c>b>d>a$. Then the leading terms of the relations are $ba$ and $cd$. There are no overlaps to check. Let $p$ be a path of length $2\ell$ from $e_1$ to $e_1$. Then $p$ has the form $(ad)^i (cb)^{\ell-i}$ for $i=0,\hdots,\ell$ or $(ad)^i(ab)(cb)^{\ell-i-1}$ for $i=0,\hdots,\ell-1$. It follows that there are $2\ell+1$ such paths. The count for even length paths from $e_2$ to $e_2$ is the same.

Now suppose $p$ is a path of length $2\ell + 1$ from $e_1$ to $e_2$. Then $p$ has the form $(ad)^\ell a$, $(ad)^i (cb)^{\ell-i}c$ for $i=0,\hdots,\ell$, or $(ad)^i(ab)(cb)^{\ell-i-1}c$ for $i=0,\hdots,\ell-1$. This gives a total of $2\ell+2$ such paths. The count for odd length paths from $e_2$ to $e_1$ is the same. Thus, $\Dq$ has the correct Hilbert series.
\end{proof}

%By \cite[Lemma 7.6]{RR1}, $\Dq$ is twisted Calabi--Yau.
 
\subsection{Graded vs ungraded isomorphisms}

%A theorem of Bell and Zhang proves that if two connected graded algebras finitely generated in degree one are isomorphic as ungraded algebras, then they are isomorphic as graded algebras \cite[Theorem 1]{BZ1}. This was extended in \cite{gadiso} to the non-connected setting. Some notation is needed first to set up the statement of this theorem.

Let $Q,Q'$ be finite quivers with $|Q_0|=|Q_0'|$. An element $\sum p_k \in Q$ is \emph{homogeneous} if all summands $p_k,p_\ell$ have the same path length, $s(p_k)=s(p_\ell)$, and $t(p_k)=t(p_\ell)$. An ideal $\cJ$ in $\kk Q$ is homogeneous if it is generated by homogeneous elements. Let $A = \kk Q/\cJ$ and $B=\kk Q'/\cJ'$ with $\cJ,\cJ'$ homogeneous ideals in degree at least two. Then $A$ and $B$ are \emph{isomorphic as graded path algebras} if $M_{Q'} = PM_Q P\inv$ for some permutation matrix $P$ corresponding to $\sigma \in \cS_{|Q_0|}$ and there exists an algebra isomorphism $\phi:A \to B$ such that $\phi( (A_k)_{uv} ) = (B_k)_{\sigma(u)\sigma(v)}$.

\begin{theorem}[{\cite[Theorem 8]{gadiso}}]\label{thm.graded}
Let $A = \kk Q/\cJ$ and $B=\kk Q'/\cJ'$ with $\cJ,\cJ'$ homogeneous ideals in degree at least two. If $A \iso B$ as (ungraded) algebras, then $A \iso B$ as graded path algebras.
\end{theorem}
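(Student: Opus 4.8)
The plan is to adapt the argument of Bell and Zhang \cite{BZ1} to the setting of a graded algebra whose degree-zero part is the semisimple ring $A_0 = \kk Q_0 \iso \kk^{|Q_0|}$ rather than $\kk$. The grading on $A = \kk Q/\cJ$ (and likewise on $B$) is the $\NN$-grading by path length; since $Q$ is finite and $\cJ$ is homogeneous in degrees $\geq 2$, each graded piece $A_k$ is finite-dimensional and $A$ is generated over $A_0$ by the degree-one space $A_1 = \kk Q_1$. I would encode the grading as an action of $\kk^\times$, writing $\gamma^A_t$ for the automorphism of $A$ acting on $A_k$ by the scalar $t^k$; this defines a one-parameter subgroup $\gamma^A \colon \kk^\times \to \mathrm{Aut}(A)$ fixing $A_0$ pointwise. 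The goal is to replace an abstract isomorphism $\phi \colon A \to B$ by one intertwining $\gamma^A$ and $\gamma^B$, and then to read off the graded path-algebra data from the intertwined map.

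The steps are as follows. First, since any automorphism of $A$ is determined by its effect on the finite-dimensional generating space $A_0 \oplus A_1$, the relevant group of automorphisms is an affine algebraic group, and $\gamma^A$ is a cocharacter of it (similarly for $B$). Second, transport the grading action across $\phi$: the assignment $\beta_t = \phi \circ \gamma^A_t \circ \phi^{-1}$ is again a one-parameter subgroup of $\mathrm{Aut}(B)$. Third --- this is the heart of the matter --- invoke the Bell--Zhang mechanism: the grading cocharacter is a maximal torus of (the identity component of) $\mathrm{Aut}(B)$, so $\beta$ and the native cocharacter $\gamma^B$ lie in conjugate maximal tori and are therefore conjugate, say $\psi \circ \beta_t \circ \psi\inv = \gamma^B_t$ for some $\psi \in \mathrm{Aut}(B)$. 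Setting $\Phi = \psi \circ \phi$ then gives $\Phi \circ \gamma^A_t = \gamma^B_t \circ \Phi$ for all $t$, so $\Phi$ is a \emph{graded} isomorphism. Finally, a graded isomorphism restricts to an isomorphism $A_0 \to B_0$ of the semisimple degree-zero parts; after composing with a graded inner automorphism by a unit of $B_0$ if necessary, $\Phi$ carries the vertex idempotents $\{e_v\}$ bijectively to $\{e'_w\}$, yielding a permutation $\sigma$ of the vertex set. Because $\Phi$ is graded and respects this matching, it sends the component $(A_k)_{uv}$ into $(B_k)_{\sigma(u)\sigma(v)}$ for every $k,u,v$; in degree one this forces $M_{Q'} = P M_Q P\inv$ for the permutation matrix $P$ of $\sigma$. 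These are precisely the conditions for $A$ and $B$ to be isomorphic as graded path algebras.

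The main obstacle is the third step, the conjugacy of the two one-parameter subgroups inside $\mathrm{Aut}(B)$ in the presence of a nontrivial semisimple base. Over a field base ($A_0 = \kk$) this is exactly Bell and Zhang's theorem; the work lies in checking that their torus-conjugacy argument survives when $A_0 = \kk^{|Q_0|}$, which requires (i) confirming that $\mathrm{Aut}(B)$ is genuinely algebraic and that both $\gamma^B$ and $\beta$ are cocharacters of it, and (ii) verifying that the grading $\kk^\times$ is maximal among tori so that conjugacy of maximal tori applies --- here one uses that $\phi$ preserves the finite-dimensional Hilbert data of each graded piece. A secondary technical point is to ensure that the conjugating automorphism $\psi$, together with the subsequent inner adjustment, can be chosen so that the grading and the idempotent matching hold simultaneously; this is where the discrete datum of the vertex permutation $\sigma$ must be reconciled with the continuous torus argument.
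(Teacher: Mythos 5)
First, note that the paper does not prove this statement at all: it is imported verbatim as \cite[Theorem 8]{gadiso}, and the proof there (like the Bell--Zhang theorem it generalizes) is an elementary filtration argument. One transports the two-sided ideal $\mathfrak m = A_{\geq 1}$ (the ideal generated by the arrows) across the ungraded isomorphism $\phi$, observes that $I=\phi(\mathfrak m)$ satisfies $I + B_{\geq 2} = B_{\geq 1}$ because $B$ is generated in degree one over $B_0$, and then compares associated graded rings: $B \iso \mathrm{gr}_{B_{\geq 1}}(B) \iso \mathrm{gr}_{I}(B) \iso \mathrm{gr}_{\mathfrak m}(A) \iso A$ as graded algebras. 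The extra work in \cite{gadiso} beyond \cite{BZ1} is that $A/\mathfrak m \iso \kk^{|Q_0|}$ rather than $\kk$, so one must also match the two complete sets of primitive orthogonal idempotents (they are conjugate in a semiperfect ring), which is what produces the vertex permutation $\sigma$ and the condition $M_{Q'}=PM_QP\inv$. Your final step (matching idempotents, reading off $\sigma$) is consistent with this, but your central mechanism is not the one used, and more importantly it does not work.

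The fatal step is the third one. (i) $\mathrm{Aut}(B)$ is not an affine algebraic group: an automorphism is determined by where it sends $A_0\oplus A_1$, but those images lie in all of $B$, not in $A_0\oplus A_1$, and in general there are automorphisms sending arrows to arrows plus arbitrarily high-degree tails (already $\mathrm{Aut}(\kk[x,y])$ contains all maps $y\mapsto y+p(x)$). So conjugacy of maximal tori is not available in the form you invoke. (ii) Even ignoring that, the grading cocharacter is essentially never a maximal torus: for $\kk_q[x,y]$ with generic $q$ the graded automorphism group is the two-dimensional torus $(x,y)\mapsto(\lambda x,\mu y)$ and the grading is the diagonal $\mathbb{G}_m$ inside it; for $q=1$ it contains all of $\GL_2$. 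And conjugacy of two maximal tori containing $\gamma^B$ and $\beta$ would in any case only conjugate $\beta$ to \emph{some} cocharacter of the torus containing $\gamma^B$, not to $\gamma^B$ itself --- distinct cocharacters of the same torus, e.g.\ $t\mapsto(t,t)$ and $t\mapsto(t,t^2)$, are not conjugate. (iii) Relatedly, your sketch never makes essential use of generation in degree one, yet the theorem is false without it: $\kk[x]$ with $\deg x=1$ and $\kk[x]$ with $\deg x=2$ are isomorphic connected graded algebras that are not graded-isomorphic. Any correct argument must exploit degree-one generation at the crucial moment, which is exactly what the identity $I+B_{\geq 2}=B_{\geq 1}$ does in the filtration proof.
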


If $Q$ and $Q'$ are quivers on a single vertex, then Theorem \ref{thm.graded} follows from \cite[Theorem 1]{BZ1}. We now apply this theorem to study the isomorphism problem for the $A_n(\bq)$ and the $B_n(\bq)$.

\subsection{The case \texorpdfstring{$A_n(\bq)$}{Type A}}
The first result in this section establishes certain important isomorphisms amongst the $A_n(\bq)$.

\begin{lemma}\label{lem.An}
Let $n \geq 3$ and $\bq \in (\kk^\times)^n$. 

%%Scalar isomorphisms
(1) Set $p_i=\frac{\alpha_{i-1}}{\alpha_i}q_i$ for all $0\leq i < n$. 
The linear map $\phi_\alpha:A_n(\bq)\to A_n(\bp)$ defined by
\[
    \phi_\alpha(e_i) = e_i, \qquad
    \phi_\alpha(a_i) = \alpha_ia_i, \qquad
    \phi_\alpha(a_i^*) = a_i^*,
\]
extends to an isomorphism.

%%Rotation isomorphisms
(2) Set $p_i=q_{i-1}$ for all $0\leq i < n$. 
The linear map $\psi:A_n(\bq) \to A_n(\bp)$ defined by
\[
    \psi(e_i) = e_{i+1}, \qquad
    \psi(a_i) = a_{i+1}, \qquad 
    \psi(a_i^*) = a_{i+1}^*,
\]
extends to an isomorphism.

%%Reflection isomorphisms
(3) Set $p_i=q_{n-i}\inv$ for all $0\leq i < n$. 
The linear map $\pi:A_n(\bq) \to A_n(\bp)$ defined by
\[
    \pi(e_i) = e_{n-i}, \qquad
    \pi(a_i) = a_{n-i-1}^*, \qquad 
    \pi(a_i^*) = a_{n-i-1},
\]
extends to an isomorphism.
\end{lemma}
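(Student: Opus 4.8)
The plan is to handle the three parts uniformly: first realize each stated linear map as a graded algebra homomorphism $\kk Q \to \kk Q$, then check that it carries the defining relations of $A_n(\bq)$ into the relation ideal of $A_n(\bp)$ so that it descends to a homomorphism $A_n(\bq) \to A_n(\bp)$, and finally produce an explicit two-sided inverse of the same shape.

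For the first step, I would record that in the quiver \eqref{eq.An} one has $a_i\colon e_i \to e_{i+1}$ and $a_i^*\colon e_{i+1}\to e_i$ (indices taken mod $n$). Each of the three maps sends vertices bijectively to vertices and each arrow to a scalar multiple of an arrow, so it suffices to confirm that source and target are respected after relabeling vertices by the underlying permutation $\sigma$ --- the identity for $\phi_\alpha$, the cyclic shift $i\mapsto i+1$ for $\psi$, and the reflection $i\mapsto n-i$ for $\pi$. The only delicate case is $\pi$: the reflection reverses the orientation of the $n$-cycle, but since it simultaneously interchanges $a_j$ and $a_j^*$ (which point in opposite directions), every arrow keeps its direction and $\pi$ is an honest homomorphism rather than an anti-homomorphism. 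Each map therefore extends multiplicatively, and in a grading-preserving way, to $\kk Q\to\kk Q$.

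The main computation is the descent to the quotient, carried out by substituting the images of the generators into the $i$-th relation $a_ia_i^* - q_i a_{i-1}^* a_{i-1}$ and simplifying. For $\phi_\alpha$ the image is $\alpha_i a_i a_i^* - q_i\alpha_{i-1}a_{i-1}^*a_{i-1}$, which is a scalar multiple of the $i$-th relation of $A_n(\bp)$ exactly because $p_i = \tfrac{\alpha_{i-1}}{\alpha_i}q_i$. For $\psi$ the image is $a_{i+1}a_{i+1}^* - q_i a_i^* a_i$, the $(i{+}1)$-st relation of $A_n(\bp)$ once $p_{i+1}=q_i$. For $\pi$, writing $j=n-i-1$, the image equals $a_j^* a_j - q_i a_{j+1}a_{j+1}^* = -q_i\bigl(a_{j+1}a_{j+1}^* - q_i^{-1}a_j^*a_j\bigr)$, which is $-q_i$ times the $(n-i)$-th relation of $A_n(\bp)$ precisely when $p_{n-i}=q_i^{-1}$, i.e. $p_i=q_{n-i}^{-1}$. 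In each case every defining relation maps into the relation ideal, so the map descends to $A_n(\bq)\to A_n(\bp)$.

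Finally, bijectivity is immediate once descent is established. Each map is surjective since its image contains all the vertices and arrows generating the target, and I would conclude by exhibiting a two-sided inverse of the same form: $\phi_\alpha^{-1}=\phi_{\alpha\inv}$ (scaling $a_i$ by $\alpha_i\inv$), $\psi^{-1}$ the inverse shift $i\mapsto i-1$, and $\pi^{-1}=\pi$, using that $\pi$ squares to the identity on generators while sending the parameters $\bq\mapsto\bp\mapsto\bq$. Alternatively, since all these algebras share the matrix Hilbert series $(I-Mt+It^2)\inv$ of Lemma \ref{lem.AnCY} and the maps are graded, a surjective degree-preserving map between graded vector spaces with equal finite-dimensional graded pieces is automatically bijective. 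The step most prone to error is the reflection $\pi$ of part (3): one must carry the reversal $i\mapsto n-i$ through the relation modulo $n$, use the homomorphism (not anti-homomorphism) property observed above, and verify that the reflected relation acquires $q_i^{-1}$ rather than $q_i$; the other two parts are straightforward substitutions.
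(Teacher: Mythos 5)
Your proposal is correct and follows essentially the same route as the paper: a direct substitution of generator images into each relation $a_ia_i^*-q_ia_{i-1}^*a_{i-1}$, identifying the result as a scalar multiple of a relation of $A_n(\bp)$, together with the observation that the maps are bijective. Your extra care in checking source/target compatibility for the reflection $\pi$ and in exhibiting explicit inverses only fleshes out steps the paper dismisses as clear.
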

\begin{proof}
The maps are all clearly bijective. It suffices to show that each respects the relations on $A_n(\bq)$. For $\phi_\alpha$:
\[
\phi_\alpha(a_i)\phi_\alpha(a_i^*)-q_i\phi_\alpha(a_{i-1}^*)\phi_\alpha(a_{i-1})
    = \alpha_ia_ia_i^*-q_i\alpha_{i-1}a_{i-1}^*a_{i-1}
%    = \alpha_i\left(a_ia_{i}^*-q_i\frac{\alpha_{i-1}}{\alpha_{i}}a_{i-1}^*a_{i-1}\right)=0.
    = \alpha_i\left(a_ia_{i}^*-p_i a_{i-1}^*a_{i-1}\right)=0.
\]
For $\psi$:
\[
\psi(a_i)\psi(a_i^*)-q_i\psi(a_{i-1}^*)\psi(a_{i-1})
    = a_{i+1}a_{i+1}^*-q_ia_i^*a_i
    = a_{i+1}a_{i+1}^*-p_{i+1}a_i^*a_i=0.
\]
Finally, for $\pi$:
\begin{align*}
\pi(a_i)\pi(a_i^*)-q_i\pi(a_{i-1}^*)\pi(a_{i-1}) 
        &= a_{n-i-1}^*a_{n-i-1}-q_ia_{n-i}a_{n-i}^* \\
        &= q_i(q_i\inv a_{n-i-1}^*a_{n-i-1}-a_{n-i}a_{n-i}^*) \\
        &= q_i(p_{n-i} a_{n-i-1}^*a_{n-i-1}-a_{n-i}a_{n-i}^*)
        =0.
\end{align*}
Hence, each map extends to an isomorphism.
\end{proof}

\begin{theorem}\label{thm.An}
Let $n \geq 3$ and $\bq,\bp \in (\kk^\times)^n$. Then $A_n(\bq)\iso A_n(\bp)$ if and only if there exists $\alpha_i \in \kk^\times$ and $0 \leq k < n$ such that 
\[
\text{(1)}~p_i = \frac{\alpha_{i+k-1}}{\alpha_{i+k}} q_{i+k} 
    \quad\text{for all $0\leq i < n$, or} \qquad
\text{(2)}~p_i = \frac{\alpha_{n-i-k}}{\alpha_{n-i-k-1}} q_{n-i-k}\inv
    \quad\text{for all $0\leq i < n$.}
\]
\end{theorem}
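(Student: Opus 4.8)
```latex
\textbf{Proof proposal.}
The reverse direction is immediate from Lemma~\ref{lem.An}: given the three
generating isomorphisms $\phi_\alpha$, $\psi$, and $\pi$, the plan is to
compose them and track how the parameter vectors transform. Iterating $\psi$
exactly $k$ times produces the cyclic shift $q_i \mapsto q_{i+k}$, composing
with $\phi_\alpha$ introduces the factor $\alpha_{i-1}/\alpha_i$, and
optionally composing with $\pi$ produces the inversion-and-reversal appearing
in case~(2). So both families of conditions are realized by genuine
isomorphisms, and the content of the theorem is the forward direction.

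For the forward direction, the main tool is Theorem~\ref{thm.graded}: any
algebra isomorphism $A_n(\bq) \iso A_n(\bp)$ is induced by a graded path-algebra
isomorphism, so it is determined by a permutation $\sigma$ of the vertices
together with a graded-degree-one linear map on the arrow spaces compatible
with $\sigma$. First I would pin down $\sigma$. Since $n \geq 3$ the quiver
\eqref{eq.An} is schurian, and the directed structure of the underlying graph
(a single oriented cycle $e_0 \to e_1 \to \cdots$ via the $a_i$, together with
the reverse cycle via the $a_i^*$) is very rigid: the permutation $\sigma$ must
preserve the adjacency matrix, $M_{Q} = P M_Q P\inv$. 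The automorphisms of this
bidirected $n$-cycle are exactly the cyclic rotations and the reflection, so
$\sigma$ is either a rotation $i \mapsto i+k$ (leading to case~(1)) or a
reflection composed with a rotation (leading to case~(2)). After composing the
given isomorphism with a suitable power of $\psi$ and, in the reflection case,
with $\pi$, I may therefore reduce to the situation where $\sigma = \mathrm{id}$.

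With $\sigma = \mathrm{id}$ fixed, schurianness forces the induced linear map to
send each arrow to a scalar multiple of the unique arrow with the same source
and target: $a_i \mapsto \alpha_i a_i$ and $a_i^* \mapsto \beta_i a_i^*$ for
some $\alpha_i,\beta_i \in \kk^\times$. Substituting into the defining relation
$a_i a_i^* - q_i a_{i-1}^* a_{i-1}$ and demanding that the image lie in the
ideal generated by $a_i a_i^* - p_i a_{i-1}^* a_{i-1}$ yields, after matching
coefficients of $a_i a_i^*$ and $a_{i-1}^* a_{i-1}$, the system
\[
\alpha_i \beta_i = \lambda_i, \qquad
q_i\, \beta_{i-1} \alpha_{i-1} = \lambda_i\, p_i
\]
for some nonzero scaling $\lambda_i$ of the $i$-th relation. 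Eliminating the
$\lambda_i$ and $\beta_j$ gives $p_i = (\beta_{i-1}\alpha_{i-1})/(\alpha_i\beta_i)\, q_i$,
which is exactly the shape $p_i = (\gamma_{i-1}/\gamma_i) q_i$ after setting
$\gamma_i := \alpha_i \beta_i$; this is case~(1) with $k=0$. Unwinding the
earlier reductions by $\psi^k$ (and $\pi$) then restores the general $k$ and the
case~(2) alternative.

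The step I expect to be the main obstacle is the rigidity analysis of
$\sigma$ together with the verification that the induced map is genuinely
diagonal on arrows. One must rule out the possibility that a graded isomorphism
mixes $a_i$ with $a_j^*$ for $i \neq j$ or sends an arrow to a linear
combination of several arrows; here schurianness (valid precisely because
$n \geq 3$) is essential, as it guarantees each homogeneous arrow component is
one-dimensional, and the constraint $\tau(e_iVe_j)\subset e_{\mu\inv(j)}Ve_i$
pins the target down uniquely. The bookkeeping of indices under the rotation and
reflection—especially keeping the inversion $q \mapsto q\inv$ and the reversal
$i \mapsto n-i$ consistent in case~(2)—is the other delicate point, but it is
routine once the structure of $\sigma$ is established.
```
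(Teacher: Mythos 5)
Your proposal is correct and follows essentially the same route as the paper: invoke the graded-isomorphism theorem, observe that the induced vertex permutation lies in the dihedral group of the cycle, reduce to the vertex-fixing case by composing with $\psi^k$ (and $\pi$ in the reflection case), and use schurianness to force a diagonal action on arrows, from which matching coefficients yields the stated condition with $\alpha_i = \beta_i\beta_i^*$. The only cosmetic difference is that you allow an explicit scalar $\lambda_i$ rescaling each relation, which the paper absorbs by directly setting $\Phi(a_ia_i^*-q_ia_{i-1}^*a_{i-1})=0$ and comparing coefficients.
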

\begin{proof}
If (1) holds for some $\alpha_i \in \kk^\times$, then by Lemma \ref{lem.An}, $\psi^{-k} \circ \phi_\alpha$ gives the intended isomorphism. If (2) holds for some $\alpha_i \in \kk^\times$, then by Lemma \ref{lem.An}, $\psi^{-k} \circ \pi \circ \phi_\alpha$ gives the intended isomorphism. 

Assume $\Phi:A_n(\bq)\to A_n(\bp)$ is a (graded) isomorphism. Then $\Phi$ restricts to a permutation of the set of vertices $\{e_i\}$. Since $a_i$ is an arrow from $e_i$ to $e_{i+1}$, then this permutation preserves adjacency. Hence, $\Phi$ may be regarded as an element of the dihedral group on $n$ vertices.

First, suppose that $\Phi(e_i)=e_i$ for all $i$. Because $A_n(\bq)$ is schurian, then there are scalars $\beta_i,\beta_i^* \in \kk^\times$ such that $\Phi(a_i)=\beta_i a_i$ and $\Phi(a_i^*)=\beta_i^* a_i^*$. Then
\[
0 = \Phi(a_ia_i^* - q_ia_{i-1}^*a_{i-1})
    = \beta_i\beta_i^* a_ia_i^* -  q_i\beta_{i-1}\beta_{i-1}^* a_{i-1}^*a_{i-1}
    = (p_i\beta_i\beta_i^* -  q_i\beta_{i-1}\beta_{i-1}^*)a_{i-1}^*a_{i-1}.
\]
Set $\alpha_i=\beta_i\beta_i^*$. Then this gives (1) for $k=0$.

Now assume that $\Phi$ corresponds to a (nontrivial) rotation, so $\Phi(e_i)=e_{i-k}$ for some $k$, $0 < k < n$. Set $r_i = p_{i-k}$ so that $\psi^k:A_n(\bp) \to A_n(\br)$ is an isomorphism. Then $\psi^k \circ \Phi:A_n(\bp) \to A_n(\br)$ is an isomorphism that fixes the vertices. Consequently, there are scalars $\alpha_i \in \kk^\times$ such that $\frac{\alpha_{i-1}}{\alpha_i} q_i = r_i = p_{i-k}$.

Finally, assume $\Phi$ corresponds to a reflection. Similar to the previous case, there exists $k$, $0 \leq k < n$, such that $\pi \circ \psi^k \circ \Phi:A_n(\bq) \to A_n(\br)$ is an isomorphism that fixes the vertices, where $r_i = \frac{\alpha_{i-1}}{\alpha_i} q_i$. Since $\pi \circ \psi^k:A(\bp) \to A_n(\br)$, then $r_i = (p_{n-i-k})\inv$. It follows that $p_{n-i-k} = \frac{\alpha_i}{\alpha_{i-1}} q_i\inv$. So, $p_i = \frac{\alpha_{n-i-k}}{\alpha_{n-i-k-1}} q_{n-i-k}\inv$.
\end{proof}

\subsection{The case \texorpdfstring{$B_n(\bq)$}{Type B}}
The quiver \eqref{eq.Bn} is easier to to study because it only has rotational symmetry. The proofs in this case are almost identical to those above for the $A_n(\bq)$ case.

\begin{lemma}\label{lem.Bn}
Let $n \geq 2$ and $\bq \in (\kk^\times)^n$. 

%%Scalar isomorphisms
(1) Set $p_i=\frac{\alpha_{i+1}}{\alpha_i}q_i$ for all $0\leq i < n$. 
The linear map $\phi_\alpha:B_n(\bq)\to B_n(\bp)$ defined by
\[
    \phi_\alpha(e_i) = e_i, \qquad
    \phi_\alpha(a_i) = a_i, \qquad
    \phi_\alpha(b_i) = \alpha_i b_i,
\]
extends to an isomorphism.

%%Rotation isomorphisms
(2) Set $p_i=q_{i-1}$ for all $0\leq i < n$. 
The linear map $\psi:B_n(\bq) \to B_n(\bp)$ defined by
\[
    \psi(e_i) = e_{i+1}, \qquad
    \psi(a_i) = a_{i+1}, \qquad 
    \psi(b_i) = b_{i+1},
\]
extends to an isomorphism.
\end{lemma}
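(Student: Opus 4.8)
The plan is to mimic the proof of Lemma~\ref{lem.An}, exploiting the fact that each prescribed map permutes the arrows of $Q$ up to nonzero scalars and hence is automatically a bijective linear endomorphism of the arrow space $V=\kk Q_1$. Since both $\phi_\alpha$ and $\psi$ send arrows to scalar multiples of arrows (with all scalars in $\kk^\times$), each is clearly bijective at the level of generators; the only thing requiring verification is that each descends to the quotient, i.e.\ that the defining relations of $\Bq$ are carried into the ideal generated by the defining relations of $\Bp$.

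For part~(1), I would apply $\phi_\alpha$ directly to the $i$th relation and factor. Writing out $\phi_\alpha(b_ia_i - q_i a_i b_{i+1}) = \alpha_i b_i a_i - q_i \alpha_{i+1} a_i b_{i+1}$ and pulling out $\alpha_i$ yields $\alpha_i\bigl(b_i a_i - \tfrac{\alpha_{i+1}}{\alpha_i} q_i\, a_i b_{i+1}\bigr)$, which equals $\alpha_i(b_i a_i - p_i a_i b_{i+1})$ precisely because $p_i = \tfrac{\alpha_{i+1}}{\alpha_i} q_i$; this is the $i$th relation of $\Bp$, so it vanishes. As $\alpha_i \in \kk^\times$, the map $\phi_\alpha$ respects every relation and therefore extends to the claimed isomorphism.

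For part~(2), I would again apply $\psi$ to the $i$th relation, obtaining $\psi(b_i a_i - q_i a_i b_{i+1}) = b_{i+1}a_{i+1} - q_i a_{i+1} b_{i+2}$. The target relation of $\Bp$ at index $i+1$ is $b_{i+1}a_{i+1} - p_{i+1} a_{i+1} b_{i+2}$, and since $p_{i+1} = q_i$ by the prescription $p_i = q_{i-1}$, the image is exactly this relation and hence zero. All index arithmetic is understood modulo $n$, so one should simply note that the shift $i \mapsto i+1$ cyclically permutes the $n$ relations, matching them with the relations of $\Bp$; no reflection map appears here, reflecting the fact (noted above) that the quiver \eqref{eq.Bn} has only rotational symmetry. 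I do not anticipate any genuine obstacle: both verifications are one-line scalar computations, and the only point demanding minor care is the cyclic indexing, to ensure that the matched relations wrap around correctly at $i = n-1$.
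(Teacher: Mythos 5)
Your proposal is correct and follows essentially the same route as the paper: check bijectivity on generators, then verify that each map sends the $i$th relation of $B_n(\bq)$ to a nonzero scalar multiple of a relation of $B_n(\bp)$, using $p_i = \tfrac{\alpha_{i+1}}{\alpha_i}q_i$ for $\phi_\alpha$ and $p_{i+1}=q_i$ for $\psi$ with indices read modulo $n$. Your computation for $\psi$, giving $b_{i+1}a_{i+1}-q_i a_{i+1}b_{i+2}$, is in fact the correct image (the paper's displayed formula has a small indexing typo, writing $b_{i+1}$ where $b_{i+2}$ is meant).
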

\begin{proof}
The maps are all clearly bijective. It suffices to show that each respects the relations on $B_n(\bq)$. For $\phi_\alpha$:
\[
\phi_\alpha(b_i)\phi_\alpha(a_i) - q_i \phi_\alpha(a_i)\phi_\alpha(b_{i+1})
    = \alpha_i b_ia_i - q_i \alpha_{i+1} a_ib_{i+1}
    = \alpha_i (b_ia_i - p_i a_ib_{i+1})
    = 0.
\]
For $\psi$:
\[
\psi(b_i)\psi(a_i) - q_i \psi(a_i)\psi(b_{i+1})
    = b_{i+1}a_{i+1} - q_i a_{i+1}b_{i+1}
    = b_{i+1}a_{i+1} - p_{i+1} a_{i+1}b_{i+1}
    = 0.
\]
Hence, each map extends to an isomorphism.
\end{proof}

\begin{theorem}\label{thm.Bn}
Let $n \geq 2$ and $\bq,\bp \in (\kk^\times)^n$. Then $B_n(\bq) \iso B_n(\bp)$ if and only if there exists $\alpha_i \in \kk^\times$ and $0 \leq k < n$ such that $p_i = \frac{\alpha_{i+k+1}}{\alpha_{i+k}} q_{i+k}$ for all $0\leq i < n$.
\end{theorem}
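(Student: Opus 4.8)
The plan is to follow the template of the proof of Theorem~\ref{thm.An}, which here simplifies because the quiver \eqref{eq.Bn} admits only rotational symmetry. For the sufficiency direction, suppose $p_i = \frac{\alpha_{i+k+1}}{\alpha_{i+k}}q_{i+k}$ for all $i$. Then, exactly as in Theorem~\ref{thm.An}, the composite $\psi^{-k}\circ\phi_\alpha$ assembled from the isomorphisms of Lemma~\ref{lem.Bn} furnishes an isomorphism $B_n(\bq)\to B_n(\bp)$: the map $\phi_\alpha$ of Lemma~\ref{lem.Bn}(1) realizes the rescaling $q_i \mapsto \frac{\alpha_{i+1}}{\alpha_i}q_i$ to an intermediate parameter $\tilde\bq$, and the shift $\psi^{-k}$ of Lemma~\ref{lem.Bn}(2) then moves the index by $k$, so that $p_i = \tilde q_{i+k} = \frac{\alpha_{i+k+1}}{\alpha_{i+k}}q_{i+k}$ as required. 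This half is routine given Lemma~\ref{lem.Bn}.

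For necessity, Theorem~\ref{thm.graded} reduces the problem to a graded isomorphism $\Phi:B_n(\bq)\to B_n(\bp)$, which restricts to an adjacency-preserving permutation $\sigma$ of the vertex set $\{e_i\}$. The first step is the structural observation that every such $\sigma$ must be a rotation: the arrows $a_i$ form a single directed $n$-cycle, so $\sigma$ carries this cycle to itself preserving its orientation, forcing $\sigma(i) = i-k$ for some fixed $0 \le k < n$; the unique loop $b_i$ at $e_i$ is then sent to a scalar multiple of the unique loop $b_{\sigma(i)}$. In particular no reflection can occur, which is the sole structural difference from, and the essential simplification relative to, the $A_n$ case.

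It then remains to solve the vertex-fixing case and transport it along a rotation. When $\sigma=\mathrm{id}$, the fact that \eqref{eq.Bn} is schurian forces $\Phi(a_i)=\beta_i a_i$ and $\Phi(b_i)=\gamma_i b_i$ for scalars $\beta_i,\gamma_i\in\kk^\times$; applying $\Phi$ to the relation $b_ia_i-q_ia_ib_{i+1}$ and rewriting $b_ia_i = p_ia_ib_{i+1}$ in $B_n(\bp)$ yields $\beta_i(\gamma_i p_i - q_i\gamma_{i+1})a_ib_{i+1}=0$, hence $p_i = \frac{\gamma_{i+1}}{\gamma_i}q_i$, which is the asserted condition with $k=0$ and $\alpha_i=\gamma_i$. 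For a general rotation $\Phi(e_i)=e_{i-k}$, set $r_i = p_{i-k}$ so that $\psi^k:B_n(\bp)\to B_n(\br)$ is an isomorphism by Lemma~\ref{lem.Bn}(2); then $\psi^k\circ\Phi:B_n(\bq)\to B_n(\br)$ fixes every vertex, so by the case just treated $\frac{\alpha_{i+1}}{\alpha_i}q_i = r_i = p_{i-k}$, and reindexing $i\mapsto i+k$ produces $p_i = \frac{\alpha_{i+k+1}}{\alpha_{i+k}}q_{i+k}$.

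The main obstacle is organizational rather than conceptual: one must pin down that the quiver automorphisms of \eqref{eq.Bn} are exactly the rotations, ruling out any $\pi$-type reflection isomorphism of the kind that appears for $A_n$, and then keep the cyclic index shifts consistent when transporting the vertex-fixing solution through $\psi^k$. The checks that the candidate maps respect the defining relations are immediate from Lemma~\ref{lem.Bn}.
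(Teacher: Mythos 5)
Your proof is correct and follows essentially the same route as the paper: sufficiency via $\psi^{-k}\circ\phi_\alpha$ from Lemma~\ref{lem.Bn}, and necessity by reducing a graded isomorphism to the vertex-fixing case using the schurian property and then transporting through a rotation $\psi^k$. Your explicit justification that the vertex permutation must be a rotation (the $a_i$ form an oriented $n$-cycle) only spells out what the paper asserts in one line, so there is no substantive difference.
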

\begin{proof}
Suppose $p_i = \frac{\alpha_{i+k+1}}{\alpha_{i+k}} q_{i+k}$ for some $\alpha_i \in \kk^\times$. By Lemma \ref{lem.Bn}, $\psi^{-k} \circ \phi_\alpha$ gives the intended isomorphism.

Assume $\Phi:B_n(\bq)\to B_n(\bp)$ is a (graded) isomorphism. Then $\Phi$ restricts to a permutation of the set of vertices $\{e_i\}$. Since $a_i$ is an arrow from $e_i$ to $e_{i+1}$, then this permutation preserves adjacency. Hence, $\Phi$ may be regarded as an element of the group of rotations of order $n$.

First suppose that $\Phi(e_i)=e_i$ for all $i$. Because $B_n(\bq)$ is schurian, then there are scalars $\beta_i,\gamma_i \in \kk^\times$ such that $\Phi(a_i)=\gamma_i a_i$ and $\Phi(b_i)=\beta_i b_i$. Then
\[
0 = \Phi(b_ia_i - q_i a_i b_{i+1})
     = \gamma_i\beta_i b_ia_i - q_i \gamma_i\beta_{i+1} a_i b_{i+1}
     = \gamma_i (\beta_i p_i - q_i \beta_{i+1}) a_i b_{i+1}.
\]
Setting $\alpha_i=\beta_i$ gives the result for $k=0$.

Now assume that $\Phi$ corresponds to a (nontrivial) rotation, so $\Phi(e_i)=e_{i+k}$ for some $k$, $0 < k < n$. Set $r_i = p_{i-k}$ so that $\psi^k:B_n(\bp) \to B_n(\br)$ is an isomorphism. Then $\psi^k \circ \Phi:B_n(\bq) \to B_n(\br)$ is an isomorphism that fixes the vertices. Consequently, there are scalars $\alpha_i \in \kk^\times$ such that $\frac{\alpha_{i+1}}{\alpha_i} q_i = r_i = p_{i-k}$.
\end{proof}

Because the quiver \eqref{eq.Bn} in the case $n=2$ is schurian, the result for $\Bq$ is an immediate corollary of Theorem \ref{thm.Bn}.

\begin{corollary}\label{cor.B2}
Let $p,q \in \kk^\times$. If $\Bq \iso \Bp$, then $p=q$.
\end{corollary}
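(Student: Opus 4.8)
The plan is to recognize $\Bq$ as a single member of the two-parameter family $B_2(\bq)$ and then quote Theorem \ref{thm.Bn} verbatim. First I would line up the quiver \eqref{eq.B2} with \eqref{eq.Bn} at $n=2$: identifying the vertices $e_1, e_2$ with $e_0, e_1$, the loops $a, c$ with $b_0, b_1$, and the arrows $b, d$ with $a_0, a_1$. Under this dictionary the relations $ab - bc$ and $cd - qda$ defining $\Bq$ become $b_0 a_0 - a_0 b_1$ and $b_1 a_1 - q\, a_1 b_0$, so $\Bq$ is exactly $B_2(\bq)$ for the parameter vector $\bq = (q_0, q_1) = (1, q)$, and likewise $\Bp = B_2((1, p))$.

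With this identification in hand, the schurian property of \eqref{eq.Bn} at $n=2$ means an isomorphism $\Bq \iso \Bp$ is nothing but an isomorphism $B_2((1,q)) \iso B_2((1,p))$, so Theorem \ref{thm.Bn} applies directly. It supplies scalars $\alpha_0, \alpha_1 \in \kk^\times$ (indices read mod $2$) and some $k \in \{0, 1\}$ with $p_i = \frac{\alpha_{i+k+1}}{\alpha_{i+k}} q_{i+k}$ for $i = 0, 1$. The remaining task is simply to feed both admissible values of $k$ into this relation, exploiting that $p_0 = q_0 = 1$ by construction.

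Concretely, for $k = 0$ the $i=0$ equation collapses to $1 = \alpha_1/\alpha_0$, after which the $i=1$ equation yields $p = p_1 = (\alpha_0/\alpha_1) q_1 = q$; for $k=1$ the $i=0$ equation gives $\alpha_1/\alpha_0 = q_1$, and then the $i=1$ equation gives $p = p_1 = (\alpha_1/\alpha_0) q_0 = q$. Either way $p = q$, as claimed.

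I do not expect a genuine obstacle, which is why the statement is billed as an immediate corollary; the only point demanding care is the bookkeeping — transporting the one-parameter presentation of $\Bq$ into the normalized vector $(1, q)$, and respecting the cyclic mod-$2$ indexing of the $\alpha_i$ when the nontrivial rotation $k = 1$ is in play.
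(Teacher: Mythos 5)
Your proposal is correct and matches the paper's (implicit) argument exactly: the paper simply declares the statement an immediate consequence of Theorem \ref{thm.Bn} since the quiver \eqref{eq.Bn} is schurian for $n=2$, and you have supplied precisely the bookkeeping that deduction requires, identifying $\Bq$ with $B_2((1,q))$ and checking both rotations $k=0,1$. No gaps.
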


\section{Two-vertex quivers}\label{sec.twovert}

This section is devoted to classifying algebras satisfying the following hypotheses.

\begin{hypothesis}\label{hyp.CY2}
Assume $\cA=\cA(Q,\tau)$ is a twisted graded Calabi--Yau algebra of dimension two and finite Gelfand--Kirillov (GK) dimension. Further assume that $Q$ is strongly connected and that $|Q_0|=2$.
\end{hypothesis}

The hypothesis that $Q$ is strongly connected is to rule out the case that $A$ is the direct some of two (connected) algebras. The first result of this section establishes that algebras satisfying Hypothesis \ref{hyp.CY2} must live on one of two quivers.

\begin{lemma}\label{lem.2vert}
Assume Hypothesis \ref{hyp.CY2}. The adjacency matrix $M \in M_2(\kk)$ of $Q$ is $\begin{psmatrix}0 & 2 \\ 2 & 0\end{psmatrix}$ or $\begin{psmatrix}1 & 1 \\ 1 & 1\end{psmatrix}$.
\end{lemma}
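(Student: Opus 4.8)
The plan is to read off from Hypothesis \ref{hyp.CY2} three independent constraints on the entries of the nonnegative integer matrix $M = \begin{psmatrix} a & b \\ c & d \end{psmatrix}$ and to show they jointly leave only the two stated matrices. Since $|Q_0|=2$, the permutation $\mu$ of $Q_0$ is either the identity or the transposition, so $P=I$ or $P = \begin{psmatrix} 0 & 1 \\ 1 & 0 \end{psmatrix} =: J$. Strong connectivity of $Q$ forces an arrow in each direction between the two vertices, giving $b,c \geq 1$. The first genuine restriction comes from the bijectivity of $\tau$: since $\tau$ carries each block $e_iVe_j$ into $e_{\mu\inv(j)}Ve_i$ and the index assignment $(i,j)\mapsto(\mu\inv(j),i)$ merely permutes these blocks, a dimension count forces $\dim e_iVe_j = \dim e_{\mu\inv(j)}Ve_i$, i.e. $M_{ij}=M_{\mu\inv(j),i}$. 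For $P=I$ this says $M$ is symmetric ($b=c$); for $P=J$ it forces $a=b=c=d$. Thus only two shapes survive: $M=\begin{psmatrix} a & b \\ b & d\end{psmatrix}$ with $b\geq 1$ (case $P=I$), or $M = a\begin{psmatrix}1&1\\1&1\end{psmatrix}$ with $a\geq 1$ (case $P=J$).

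Next I would invoke finite GK dimension. Writing $h_\cA(t)=p(t)\inv = \sum_k H_k t^k$ with $p(t)=I-Mt+Pt^2$, which holds by \cite[Lemma 7.6]{RR1}, the coefficients obey the recursion $H_{k+1}=MH_k-PH_{k-1}$ with $H_0=I$, $H_1=M$. Let $v$ be the positive Perron eigenvector of the symmetric nonnegative matrix $M$, with largest eigenvalue $\lambda_+$. In both surviving cases $v^{\mathrm T}P=v^{\mathrm T}$ (trivially when $P=I$, and because $v=(1,1)^{\mathrm T}$ is $J$-fixed when $P=J$), so the scalars $e_k := v^{\mathrm T}H_k v$ satisfy $e_{k+1}=\lambda_+ e_k - e_{k-1}$. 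Solving this with $e_0=v^{\mathrm T}v>0$ and $e_1=\lambda_+ v^{\mathrm T}v>0$, the coefficient on the dominant root of $x^2-\lambda_+ x+1$ is strictly positive, so $\lambda_+>2$ would make $e_k$ grow exponentially; as $e_k \leq (\max_i v_i)^2\dim_\kk\cA_k$, this would force exponential growth of $\dim_\kk\cA_k$, contradicting finite GK dimension. Hence $\lambda_+\leq 2$.

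I would then convert $\lambda_+\leq 2$ into a finite list. For $P=J$ it reads $2a\leq 2$, so $a=1$ and $M=\begin{psmatrix}1&1\\1&1\end{psmatrix}$. For $P=I$ it reads $a+d+\sqrt{(a-d)^2+4b^2}\leq 4$, which in nonnegative integers with $b\geq 1$ has exactly five solutions: $M=\begin{psmatrix}0&2\\2&0\end{psmatrix}$, together with the four matrices having $b=1$ and $a,d\in\{0,1\}$. Finally I would eliminate the spurious $b=1$ solutions using nonnegativity of the path counts $(H_k)_{ij}\geq 0$: running $H_{k+1}=MH_k-H_{k-1}$ shows $\begin{psmatrix}0&1\\1&0\end{psmatrix}$ fails already at $H_3=-M$, while $\begin{psmatrix}1&1\\1&0\end{psmatrix}$ and its transpose fail at $H_5=-I$. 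Only $\begin{psmatrix}1&1\\1&1\end{psmatrix}$ survives, and combining the two cases leaves precisely $\begin{psmatrix}0&2\\2&0\end{psmatrix}$ and $\begin{psmatrix}1&1\\1&1\end{psmatrix}$.

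I expect the finite GK dimension step to be the main obstacle, since several of the excluded matrices (for instance $\begin{psmatrix}1&1\\1&0\end{psmatrix}$, with $\lambda_+=\tfrac{1+\sqrt5}{2}<2$) have bounded or polynomial growth and are therefore invisible to a naive growth estimate; the point is that finite GK dimension alone bounds $\lambda_+$ but does \emph{not} finish the job, and one must combine it with both the symmetry/rigidity forced by $\tau$ and the nonnegativity of $H_k$ to discard the remaining cases. The delicate part is ensuring the exponential growth predicted by $\lambda_+>2$ is genuinely realized rather than cancelled, which is exactly why the reduction to the scalar recursion $e_{k+1}=\lambda_+ e_k-e_{k-1}$ with manifestly positive initial data is the right device.
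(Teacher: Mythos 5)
Your proof is correct, but it reaches the classification by a genuinely different route than the paper. The paper's proof leans on \cite[Theorem 7.8]{RR1}, which supplies two strong facts at once: $M$, $M^T$, and $P$ pairwise commute, and the spectral radius of $M$ equals $2$ \emph{exactly}; from $\rho(M)=2$ the integer solutions drop out immediately and strong connectivity finishes the argument. You instead (i) extract the symmetry of $M$ (and, when $P$ is the transposition, the stronger constraint $a=b=c=d$) directly from the bijectivity of $\tau$ on the blocks $e_iVe_j$, (ii) prove only the inequality $\rho(M)\leq 2$ from finite GK dimension via the Perron-vector reduction of the recursion $H_{k+1}=MH_k-PH_{k-1}$ to a scalar recursion with positive initial data, and (iii) pay for the weaker spectral bound by eliminating the subspectral matrices $\begin{psmatrix}0&1\\1&0\end{psmatrix}$, $\begin{psmatrix}1&1\\1&0\end{psmatrix}$, $\begin{psmatrix}0&1\\1&1\end{psmatrix}$ through the requirement that all entries of the $H_k$ be nonnegative path counts. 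Your approach is more self-contained, relying only on \cite[Lemma 7.6]{RR1} (already quoted in Section \ref{sec.tgcy}) rather than the deeper Theorem 7.8, and you are right that finite GK dimension alone cannot exclude matrices such as $\begin{psmatrix}1&1\\1&0\end{psmatrix}$ whose spectral radius is below $2$; the paper's approach is shorter precisely because the exact equality $\rho(M)=2$ leaves no subspectral cases to dispose of. One further remark: in the case $P=\begin{psmatrix}0&1\\1&0\end{psmatrix}$ your block-dimension argument gives $a=b=c=d$, which is strictly stronger than the paper's conclusion $b=c$, $a=d$ at this stage and quietly absorbs part of what the paper defers to Lemma \ref{lem.M1} (namely that $M=\begin{psmatrix}0&2\\2&0\end{psmatrix}$ is incompatible with the nontrivial permutation); this is harmless for the lemma as stated, which only concerns $M$.
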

\begin{proof}
Write $M=\left(\begin{smallmatrix}a & b \\ c & d\end{smallmatrix}\right)$ with $a,b,c,d \in \NN$. Let $P$ be the permutation matrix of $Q_0$  corresponding to the Nakayama autormorphism of $\cA$. So, either $P=I_2$ or $P = \begin{psmatrix}0 & 1 \\ 1 & 0\end{psmatrix}$.

By \cite[Theorem 7.8]{RR1}, $M$, $M^T$, and $P$ pairwise commute, and the spectral radius of $M$ is $\rho(M)=2$. Since $M$ and $M^T$ commute, then $M$ is symmetric, so $b=c$. If  $P = \begin{psmatrix}0 & 1 \\ 1 & 0\end{psmatrix}$, then $MP=PM$ implies that $a=d$. Thus, $\rho(M)=a+b$ and the result is immediate.

Suppose $P=I_2$. The eigenvalues of $M$ are 
\[ e_{\pm} = \frac{1}{2}\left((a+d) \pm \sqrt{(a-d)^2 + 4b^2}\right). \]
Since $\rho(M)=2$, then $e_+ = 2$. Thus, $a+d \leq 4$. Given a pair $(a,d)$ with $0 \leq a,d \leq 4$, $a+d\neq 4$, the equation $e_+=2$ may be solved for $b$. The only pairs giving integer solutions are $(0,0), (2,0), (1,1), (0,2), (2,2)$. With the restriction on $Q$ strongly connected, this leaves only the matrices in the statement of the lemma.
\end{proof}

\begin{lemma}\label{lem.M1}
Assume Hypothesis \ref{hyp.CY2}. Suppose the adjacency matrix of $Q$ is $M=\begin{psmatrix}0 & 2 \\ 2 & 0\end{psmatrix}$. 
Then $P=I_2$ and for some $q \in \kk^\times$,
$\cA \iso \Aq$ or $\cA \iso \Dq$.
\end{lemma}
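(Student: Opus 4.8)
The plan is to first determine $P$, then reduce the defining data of $\cA$ to a normal form under linear changes of the arrows, and finally match the two normal forms that arise against $\Aq$ and $\Dq$.

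Since $M=\begin{psmatrix}0&2\\2&0\end{psmatrix}$, the quiver $Q$ is the quiver \eqref{eq.A2}; I write $a,c$ for the two arrows $e_1\to e_2$ and $b,d$ for the two arrows $e_2\to e_1$, so that $e_1 V e_2=\langle a,c\rangle$, $e_2 V e_1=\langle b,d\rangle$, and $e_1 V e_1=e_2 V e_2=0$ as there are no loops. Recall $\tau$ is a bijection satisfying $\tau(e_iVe_j)\subseteq e_{\mu\inv(j)}Ve_i$. If $P$ were the transposition, then $\mu=\mu\inv=(1\,2)$ and $\tau(e_1Ve_2)\subseteq e_1Ve_1=0$, which is impossible since $\tau$ is injective on the two-dimensional space $\langle a,c\rangle$; hence $\mu=\mathrm{id}$ and $P=I_2$. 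With $\mu=\mathrm{id}$ the map $\tau$ interchanges $\langle a,c\rangle$ and $\langle b,d\rangle$, and bijectivity forces both restrictions to be isomorphisms. Writing $u=(a,c)^T$ and $v=(b,d)^T$, the two mesh relations then take the form $\omega_1=u^T N_1 v$ (at $e_1$) and $\omega_2=v^T N_2 u$ (at $e_2$) for uniquely determined $N_1,N_2\in\GL_2(\kk)$.

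By Theorem \ref{thm.graded} it suffices to classify $\cA$ up to graded path-algebra isomorphism, and a vertex-fixing one is exactly a pair of base changes $u\mapsto Au$, $v\mapsto Bv$ with $A,B\in\GL_2(\kk)$. Because each $\omega_i$ spans the one-dimensional degree-two component at its vertex, such a map carries $\cA(Q,\tau)$ to $\cA(Q,\tau')$ precisely when $N_1\mapsto \lambda A^T N_1 B$ and $N_2\mapsto \mu B^T N_2 A$ for some $\lambda,\mu\in\kk^\times$. Taking $A=I$ and $B=N_1\inv$ normalizes $N_1=I$, i.e. $\omega_1=ab+cd$. The stabilizer of this condition consists of the pairs $(A,B)$ with $A^T B$ a scalar matrix, and these act on $N_2$ by $N_2\mapsto \nu\,A\inv N_2 A$ for some $\nu\in\kk^\times$ — conjugation together with an overall scalar. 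Over the algebraically closed field $\kk$, Jordan theory now forces two cases. If $N_2$ is diagonalizable, conjugation and scaling reduce it to $\mathrm{diag}(1,q)$ for some $q\in\kk^\times$, so $\omega_2=ba+q\,dc$; rescaling $d$ by $-1$ then yields the relations $ab-cd$ and $ba-q\,dc$, whence $\cA\iso\Aq$. If $N_2$ is not diagonalizable it is a single Jordan block, hence conjugate up to scale to $\begin{psmatrix}1&1\\0&1\end{psmatrix}$, and an explicit base change brings $(\omega_1,\omega_2)$ into the shape $(ab-(a+c)d,\ ba-q\,dc)$ of some $\Dq$. This exhausts the possibilities.

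The substantive part of the argument will be the passage from an abstract isomorphism to the pair $(A,B)$ and the verification that the allowable rescalings are exactly the scalars $\lambda,\mu$, so that the induced action on $(N_1,N_2)$ is as claimed; once this is in place the diagonalizable case is routine linear algebra. The delicate step will be the degenerate (Jordan) case: one must check that the single remaining normal form genuinely coincides with one of the $\Dq$ — rather than producing a further algebra lying outside both families — by matching it against the normal form extracted from the presentation in Lemma \ref{lem.DCY}. It is also worth confirming at the outset that the twisted Calabi--Yau hypothesis is precisely what guarantees $\tau$ is bijective, equivalently that $N_1$ and $N_2$ are invertible, since this invertibility is what drives every reduction above.
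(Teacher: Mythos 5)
Your proposal is correct and reaches the lemma by a genuinely different route. For $P=I_2$ you argue structurally: since $Q$ has no loops, a nontrivial $\mu$ would force $\tau(e_1Ve_2)\subseteq e_1Ve_1=0$, contradicting injectivity of $\tau$ on a two-dimensional space. The paper instead computes $\det(I-Mt+Pt^2)$ and invokes the finite GK-dimension hypothesis; both work, and yours is the more elementary. For the classification, you encode the two mesh relations as a pair $(N_1,N_2)\in\GL_2(\kk)^2$ and derive the exact action of a vertex-fixing graded isomorphism, $(N_1,N_2)\mapsto(\lambda A^TN_1B,\,\mu B^TN_2A)$, which after normalizing $N_1=I$ reduces everything to the conjugacy class of $N_2$ up to scalar --- equivalently, to the class of $N_1\inv N_2^T$. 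The paper instead picks a basis of $\kk Q_1$ putting $\tau^2|_{e_1Ve_2}$ (their $\alpha\beta$) into Jordan form. Both case divisions are exhaustive and land in $\{\Aq\}\cup\{\Dq\}$, so both prove the statement; the only debt you leave is the verification that the Jordan normal form $\bigl(I,\begin{psmatrix}1&1\\0&1\end{psmatrix}\bigr)$ really is some $\Dq$, and it is: it is $D(1)$, whose invariant $N_1\inv N_2^T$ is a nontrivial Jordan block.

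One consequence of your bookkeeping deserves emphasis, because your invariant and the paper's are \emph{not} the same and yours is the one actually preserved by isomorphisms (the quantity $N_1N_2$ transforms to $\lambda\mu\,A^TN_1BB^TN_2A$, which is not a conjugate of $N_1N_2$ unless $BB^T$ is scalar). Computing your invariant for $\Dq$ gives $N_1\inv N_2^T=\begin{psmatrix}1&q\\0&q\end{psmatrix}$, which is diagonalizable with eigenvalue ratio $q$ whenever $q\neq 1$; your normal-form analysis therefore predicts $\Dq\iso\Aq$ for $q\neq1$, and indeed the vertex-fixing map $a\mapsto a$, $c\mapsto(q-1)\inv(a+c)$, $b\mapsto b+qd$, $d\mapsto(q-1)d$ carries $ab-(a+c)d$ to $ab-cd$ and $ba-qdc$ to $ba-qdc$. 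So the two families named in the lemma overlap except at $q=1$. This does not affect the truth of the lemma or of your proof, but it does contradict the pairwise non-isomorphism asserted later in the paper (Proposition \ref{prop.D} and Theorem \ref{thm.class}); keep that discrepancy in mind if you carry the argument further.
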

\begin{proof}
Suppose $P = \begin{psmatrix}0 & 1 \\ 1 & 0\end{psmatrix}$. Then $p(t)=I-Mt+Pt^2$ and $\det(p(t)) = -(t^2-2t-1)(t-1)^2$. Since the roots of $(t^2-2t-1)$ do not lie on the unit circle, then $\cA$ does not have finite GK dimension \cite[Theorem 1.1]{RR1}.

Label the quiver as in \eqref{eq.A2}. Then
\[
\tau(a) = \alpha_{11} b + \alpha_{12} d, \quad
\tau(c) = \alpha_{21} b + \alpha_{22} d, \quad
\tau(b) = \beta_{11} a + \beta_{12} c, \quad
\tau(d) = \beta_{21} a + \beta_{22} c,
\]
where $\alpha=(\alpha_{ij}) \in \GL_2(\kk)$ and $\beta=(\beta_{ij}) \in \GL_2(\kk)$.

Suppose $\alpha\beta$ has two linearly independent eigenvectors $v_1,v_2$ with corresponding eigenvalues $\lambda_1,\lambda_2$. Set $u_1=\beta v_1$ and $u_2=\beta v_2$, so the $u_i$ are necessarily linearly independent and $\alpha u_i = \lambda_i v_i$.
Set
\[
\tilde{a} = (u_1)_1 b + (u_1)_2 d, \quad
\tilde{c} = (u_2)_1 b + (u_2)_2 d, \quad
\tilde{b} = (v_1)_1 a + (v_1)_2 c, \quad
\tilde{c} = (v_2)_1 a + (v_2)_2 c.
\]
Now 
\[ 
\tau(\tilde{a})=\lambda_1 \tilde{b}, \quad
\tau(\tilde{c})=\lambda_2 \tilde{d}, \quad
\tau(\tilde{b})=\tilde{a}, \quad
\tau(\tilde{d})=\tilde{c}.
\]
Using $\{\tilde{a},\tilde{b},\tilde{c},\tilde{d}\}$ as a basis for $\kk Q_1$ gives
\[ 
\omega = \lambda_1 \tilde{b}\tilde{a}
    + \tilde{a}\tilde{b} + \lambda_2 \tilde{d}\tilde{c} + \tilde{c}\tilde{d}.
\]
Replacing $\tilde{c}$ with $-\tilde{c}$ and setting $q=\lambda_2/\lambda_1$ gives $\cA \iso \Aq$.

Otherwise, up to a change of variable, there is no loss in assuming
$\alpha\beta=
\begin{psmatrix}\lambda & 1 \\ 0 & \lambda\end{psmatrix}$ for some $\lambda \in \kk^\times$. Let $v$ be an eigenvector corresponding to the eigenvalue $\lambda$, and let $u=\beta v$. As above, set
\[
\tilde{a} = u_1 b + u_2 d, \quad
\tilde{b} = v_1 a + v_2 c.
\]
So, $\tau(\tilde{a})=\lambda \tilde{b}$ and $\tau(\tilde{b})=\tilde{a}$. Then there are scalars $\gamma_1,\gamma_2$ such that  $\tau(c) = \gamma_1 \tilde{b} + \gamma_2 d$. By injectivity of $\tau$, $\gamma_2 \neq 0$. Set $\tilde{d}=\gamma_1 \tilde{b} + \gamma_2 d$, so that $\tau(c)=\tilde{d}$. Then
\[ 
\omega = \lambda \tilde{b}\tilde{a}
    + \tilde{a}\tilde{b} + \tilde{d}c + (\eta_1 \tilde{a} + \eta_2 c)\tilde{d}
\]
for some $\eta_1,\eta_2 \in \kk$. Setting $\eta_1 =0$ reduces to the case above, so assume $\eta_1 \neq 0$ and replace $\tilde{d}$ with $-\eta_1\inv\tilde{d}$ so that
\[ 
\omega = \lambda \tilde{b}\tilde{a}
    + \tilde{a}\tilde{b} - \tilde{d}c - (\tilde{a} + \eta_1\inv\eta_2 c)\tilde{d}.
\]
By injectivity, $\eta_2\neq0$. Finally, replace $c$ with $\tilde{c}=\eta_1\inv\eta_2 c$ so that
\[ 
\omega = \lambda \tilde{b}\tilde{a}
    + \tilde{a}\tilde{b} - \eta_1\inv\eta_2\tilde{d}\tilde{c} - (\tilde{a} + \tilde{c})\tilde{d}.
\]
Set $q=\eta_2(\lambda\eta_1)\inv$. Therefore, $\cA \iso \Dq$.
\end{proof}

\begin{lemma}\label{lem.M2}
Assume Hypothesis \ref{hyp.CY2}. Suppose the adjacency matrix of $Q$ is $M=\begin{psmatrix}1 & 1 \\ 1 & 1\end{psmatrix}$. If $P=I_2$, then $\cA \iso \algJ$. If $P = \begin{psmatrix}0 & 1 \\ 1 & 0\end{psmatrix}$ and $\cA \iso \Bq$ for some $q \in \kk^\times$.
\end{lemma}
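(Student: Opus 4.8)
The plan is to follow the template of the proof of Lemma~\ref{lem.M1}, but the argument is shorter because the quiver realizing $M=\begin{psmatrix}1&1\\1&1\end{psmatrix}$ is the schurian quiver \eqref{eq.B2}. Concretely, label the arrows $a,b,c,d$ as in \eqref{eq.B2}, so that $a$ is the loop at $e_1$, $b\colon e_1\to e_2$, $c$ is the loop at $e_2$, and $d\colon e_2\to e_1$; then each arrow space $e_iVe_j$ is one-dimensional. Since the target space $e_{\mu\inv(j)}Ve_i$ in the condition $\tau(e_iVe_j)\subset e_{\mu\inv(j)}Ve_i$ is likewise one-dimensional, $\tau$ is forced to send each generator to a scalar multiple of a single arrow, and every such scalar is nonzero because $\tau$ is bijective. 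I would then treat the two possibilities for $P$ separately, and in each case compute $\omega=\sum_{x\in Q_1}\tau(x)x$ and split it by vertex.

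For $P=I_2$ (so $\mu=\mathrm{id}$), the condition gives $\tau(a)=\xi a$, $\tau(b)=\eta d$, $\tau(c)=\theta c$, and $\tau(d)=\zeta b$ with $\xi,\eta,\theta,\zeta\in\kk^\times$, whence $\omega_1=\xi a^2+\zeta bd$ and $\omega_2=\theta c^2+\eta db$. These already have the shape of the defining relations of $\algJ$, so it remains to rescale $a\mapsto s_a a$, $b\mapsto s_b b$, $c\mapsto s_c c$, $d\mapsto s_d d$ to normalize both coefficient ratios to $-1$. Taking $s_b=s_d=1$ reduces this to solving $s_a^2=-\zeta/\xi$ and $s_c^2=-\eta/\theta$, which is possible since $\kk$ is algebraically closed; the rescaling is a graded algebra automorphism of $\kk Q$ carrying $(\omega_1,\omega_2)$ to $(a^2-bd,\,c^2-db)$, giving $\cA\iso\algJ$. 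Note that no continuous parameter survives, consistent with $\algJ$ carrying none.

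For $P=\begin{psmatrix}0&1\\1&0\end{psmatrix}$ (so $\mu$ is the transposition), the same analysis gives $\tau(a)=\eta d$, $\tau(b)=\xi a$, $\tau(c)=\zeta b$, $\tau(d)=\theta c$ with all scalars nonzero, and now $\omega_1=\eta da+\theta cd$ and $\omega_2=\xi ab+\zeta bc$. These match the relations $cd-qda$ and $ab-bc$ of $\Bq$. I would rescale $a,\dots,d$ to normalize $\omega_2$ to $ab-bc$ --- for instance taking $s_c=1$ and $s_a=-\zeta/\xi$, which requires no square roots --- after which $\omega_1$ becomes proportional to $cd-qda$ with $q=\eta\zeta/(\theta\xi)\in\kk^\times$. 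Hence $\cA\iso\Bq$.

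The calculation is routine once $\tau$ is pinned down, so the only place needing care is the index bookkeeping in $\tau(e_iVe_j)\subset e_{\mu\inv(j)}Ve_i$: I must correctly read off which single arrow $\tau$ sends each generator to, especially in the transposition case, and then verify that the resulting $\omega_1,\omega_2$ genuinely take the form of the $\algJ$ (resp.\ $\Bq$) relations rather than a twisted variant. After that the only substantive input is that the normalizing scalar equations are always solvable over $\kk$ --- immediate, and the sole point where algebraic closedness is used is the extraction of square roots in the $P=I_2$ case. Since $\algJ$ and $\Bq$ are already known to be twisted Calabi--Yau of finite GK dimension by Lemmas~\ref{lem.CCY} and~\ref{lem.BnCY}, no separate verification of Hypothesis~\ref{hyp.CY2} for either $P$ is needed.
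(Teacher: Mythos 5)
Your proposal is correct and follows essentially the same route as the paper: the one-dimensionality of each arrow space $e_iVe_j$ forces $\tau$ to act by nonzero scalars on a permutation of the arrows, and then a diagonal rescaling of generators normalizes $\omega_1,\omega_2$ to the defining relations of $\algJ$ (via square roots) or $\Bq$ (with $q=\eta\zeta/(\theta\xi)$, matching the paper's $q=\beta_1\beta_3/(\beta_2\beta_4)$). Your bookkeeping of $\tau(e_iVe_j)\subset e_{\mu\inv(j)}Ve_i$ is in fact more careful than the paper's (which has minor typos in the $P=I_2$ case), so nothing further is needed.
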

\begin{proof}
If $P=I_2$, then there are nonzero scalars $\alpha_i \in \kk^\times$ such that
\[
\tau(a)=\alpha_1 a, \qquad 
\tau(b)=\alpha_2 d, \qquad
\tau(c)=\alpha_3 b, \qquad
\tau(d)=\alpha_4 c.
\]
Thus, $\omega = \alpha_1 a^2 + \alpha_2 bd + \alpha_3 c^2 + \alpha_4 bd$. The change of variable $a \mapsto (i\sqrt{\alpha_2/\alpha_1})a$ and $c \mapsto (i\sqrt{\alpha_4/\alpha_3})c$ shows that $\cA \iso \algJ$.

Now suppose $P = \begin{psmatrix}0 & 1 \\ 1 & 0\end{psmatrix}$. Then there are nonzero scalars $\beta_i \in \kk^\times$ such that
\[
    \tau(a) = \beta_1 d, \qquad
    \tau(b) = \beta_2 a, \qquad
    \tau(c) = \beta_3 b, \qquad
    \tau(d) = \beta_4 c.
\]
Thus, $\omega = \beta_1 da + \beta_2 ab + \beta_3 bc + \beta_4 cd$. Replacing $a$ with $(-\beta_3\beta_2\inv)\tilde{a}$ gives
\[ \omega = -\beta_1\beta_2\inv\beta_3 d\tilde{a} - \beta_3 \tilde{a}b + \beta_3 bc + \beta_4 cd.\]
Set $q=(\beta_1\beta_3)/(\beta_2\beta_4)$. Therefore, $\cA \iso \Bq$.
\end{proof}

It is left only to consider isomorphisms within the families $\Aq$ and $\Dq$. The family $\Bq$ was considered in Corollary \ref{cor.B2}.

Suppose $\Phi:R \to R$ is a graded isomorphism between two algebras satisfying Hypothesis \ref{hyp.CY2}. Then $\Phi$ either fixes or swaps the trivial paths $e_1$ and $e_2$. An isomorphism $\Phi$ satisfying $\Phi(e_1) = e_1$ and $\Phi(e_2) = e_2$ will be called a \emph{type I isomorphism}. An isomorphism $\Phi$ satisfying $\Phi(e_1) = e_2$ and $\Phi(e_2) = e_1$ will be called a \emph{type II isomorphism}.

\subsection{The \texorpdfstring{$\Aq$}{two-vertex Type A} case}

The main result of this section is to show that $\Ap \iso \Aq$ if and only if $p=q^{\pm 1}$.

\begin{lemma}\label{lem.Aisos}
Let $q \in \kk^\times$. 

(1) The linear map $\phi:\Aq \to \Aqinv$ defined by $\phi(e_1)=e_1$, $\phi(e_2)=e_2$, and
\begin{align*}
\phi(a)&=c, & \phi(b)&=d, & \phi(c)&=a, & \phi(d)&=b,
\end{align*}
extends to an isomorphism.

(2) The linear map $\psi: \Aq \to \Aqinv$ defined by $\psi(e_1)=e_2$, $\psi(e_2)=e_1$, and
\begin{align*}
 \psi(a)&=qb, & \psi(b)&=a, & \psi(c)&=d, & \psi(d)&=c,
 \end{align*}
extends to an isomorphism.
\end{lemma}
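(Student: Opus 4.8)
The plan is to treat each assignment as a map defined on the vertices and arrows of the quiver \eqref{eq.A2}, and to verify in each case that it lifts to a graded endomorphism of the path algebra $\kk Q$, descends to the relevant quotient, and admits an explicit two-sided inverse. Since $\Aq$ and $\Aqinv$ are both quotients of the same path algebra $\kk Q$, it suffices to check three things: that the assignment on arrows is compatible with the source--target incidences (so that it lifts to a graded algebra map on $\kk Q$), that it carries each generator of the defining ideal of the source into the defining ideal of the target (so that it descends to a homomorphism of quotients), and that it is bijective. Because everything is generated in degrees zero and one, bijectivity follows once I exhibit a homomorphism in the reverse direction that is inverse on generators.

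For part (1), I would first observe that $\phi$ fixes both vertices and simply swaps the two arrows $e_1 \to e_2$ (namely $a \leftrightarrow c$) and the two arrows $e_2 \to e_1$ (namely $b \leftrightarrow d$), so it lifts to a graded endomorphism of $\kk Q$. Applying $\phi$ to the two relations of $\Aq$ gives $\phi(ab-cd) = cd - ab = -(ab-cd)$ and $\phi(ba - qdc) = dc - q\,ba = -q(ba - q\inv dc)$, both of which lie in the defining ideal of $\Aqinv$; hence $\phi$ descends to an algebra homomorphism $\Aq \to \Aqinv$. Since $\phi$ is an involution on the arrows, the same swap defines a homomorphism $\Aqinv \to \Aq$, and the two compositions are the identity on generators, so $\phi$ is an isomorphism.

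For part (2), the vertex swap requires me first to confirm the arrow assignments reverse incidences correctly: $\psi$ must send each arrow $e_1 \to e_2$ to an arrow $e_2 \to e_1$ and vice versa, which the given assignment does. Applying $\psi$ to the relations yields $\psi(ab-cd) = q\,ba - dc = q(ba - q\inv dc)$ and $\psi(ba - qdc) = q\,ab - q\,cd = q(ab-cd)$, both in the defining ideal of $\Aqinv$, so $\psi$ descends to a homomorphism. For the inverse I would write down the map $\Aqinv \to \Aq$ given by the vertex swap together with $a \mapsto b$, $b \mapsto q\inv a$, $c \mapsto d$, $d \mapsto c$, check that it respects the relations of $\Aqinv$ (the images are $q\inv(ba - qdc)$ and $q\inv(ab-cd)$), and verify that it composes with $\psi$ to the identity on generators.

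The computations are entirely routine; the only point requiring genuine care is the scalar $q$ in the assignment $\psi(a) = qb$. Without it, the image $\psi(ab-cd) = ba - dc$ would fail to lie in the ideal $(ab-cd,\ ba - q\inv dc)$ unless $q=1$, since modulo that ideal one has $ba - dc = (q\inv - 1)dc$. The factor $q$ is exactly what converts the relation $ab-cd$ of the source into a scalar multiple of the relation $ba - q\inv dc$ of the target, and correctly tracking this scalar (and the matching $q\inv$ in the inverse map) is the main thing to get right.
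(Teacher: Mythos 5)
Your proposal is correct and follows essentially the same route as the paper: verify that each assignment respects the two defining relations (producing scalar multiples of the relations of $\Aqinv$) and conclude bijectivity. The only difference is that you spell out the incidence checks and exhibit explicit inverse homomorphisms where the paper simply asserts the maps are "clearly bijective"; your computations, including the role of the scalar $q$ in $\psi(a)=qb$, all check out.
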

\begin{proof}
Both maps are clearly bijective. To show that they extend to homomorphisms, it is necessary only to show that they respect the relations. For $\phi$,
\begin{align*}
    \phi(a)\phi(b)-\phi(c)\phi(d) &= cd-ab= -(ab-cd)=0,\\
    \phi(b)\phi(a)-q\phi(d)\phi(c) &= dc-qba=q(ba-q^{-1}dc)=0.
\end{align*}
For $\psi$,
\begin{align*}
    \psi(a)\psi(b)-\psi(c)\psi(d) &= qba-dc = q(ba-q\inv dc) = 0, \\
    \psi(b)\psi(a)-q\psi(d)\psi(c) &= qab-qcd= q(ab-cd)=0.
\end{align*}
This proves the result.
\end{proof}

\begin{lemma}\label{lem.typeI}
Let $p,q \in \kk^\times$ with $q \neq 1$.
Assume that $\Phi: \Aq\to \Ap$ is a graded isomorphism of type I.
Then $\Phi$ is given by 
\begin{align}\label{eq.Phi}
\Phi(a)=k_1a+k_2c, \qquad
\Phi(b)=\ell_1b+ell_2d, \qquad
\Phi(c)=k_3a+k_4c, \qquad
\Phi(d)=\ell_3b+\ell_4d,
\end{align}
where $k_1k_4-k_2k_3 \neq 0$ and $\ell_1\ell_4 - \ell_2\ell_3 \neq 0$. If $k_1,k_4 \neq 0$, then $p=q$.
\end{lemma}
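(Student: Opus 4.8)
The plan is to feed the given coordinate form of $\Phi$ into the two defining relations of $\Aq$, rewrite the resulting degree-two elements in a basis of $\Ap$, and read off a system of scalar equations which, under the hypotheses $q \neq 1$ and $k_1,k_4 \neq 0$, can only be satisfied when $p=q$.

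First I would justify the coordinate form \eqref{eq.Phi}. As $\Phi$ is a graded isomorphism of type I, it preserves both the grading and the vertex decomposition, hence restricts to $\kk$-linear bijections $e_1(\Aq)_1 e_2 \to e_1(\Ap)_1 e_2$ and $e_2(\Aq)_1 e_1 \to e_2(\Ap)_1 e_1$. Since $e_1(\Aq)_1 e_2 = \kk a + \kk c$ and $e_2(\Aq)_1 e_1 = \kk b + \kk d$ are two-dimensional, the matrices $\begin{psmatrix} k_1 & k_2 \\ k_3 & k_4 \end{psmatrix}$ and $\begin{psmatrix} \ell_1 & \ell_2 \\ \ell_3 & \ell_4 \end{psmatrix}$ of these restrictions are invertible, which gives the two nonvanishing determinants.

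Next is the computational core. The Hilbert series of $\Ap$ is $p(t)\inv$ with $p(t)=I-Mt+It^2$ and $M=\begin{psmatrix} 0 & 2 \\ 2 & 0 \end{psmatrix}$; its $(1,1)$-entry is $(1+t^2)/(1-t^2)^2$, whose coefficient of $t^2$ is $3$. Thus the $e_1\to e_1$ component in degree two has dimension three, so among the four paths $ab,ad,cb,cd$ the only relation is $ab=cd$ and $\{ab,ad,cb\}$ is a basis; symmetrically the $e_2\to e_2$ component is spanned by $\{bc,da,dc\}$ with $ba=p\,dc$. I would then expand $\Phi(ab-cd)$, substitute $cd=ab$, and set the three basis coefficients to zero, and do the same for $\Phi(ba-q\,dc)$ using $ba=p\,dc$; together these produce six equations relating the $k_i,\ell_j,p,q$.

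Finally I would solve this system. Subtracting the $cb$-coefficient equation coming from $ab-cd$ from the $bc$-coefficient equation coming from $ba-q\,dc$ yields $(1-q)k_4\ell_3=0$, so $\ell_3=0$; invertibility of the $\ell$-matrix then forces $\ell_1,\ell_4\neq0$, and the $cb$-equation gives $k_2=0$. The analogous comparison of the $ad$- and $da$-coefficients gives $(1-q)k_3\ell_4=0$, hence $k_3=0$, and then $\ell_2=0$ because $k_1\neq0$. With $\Phi$ now diagonal, the two surviving equations collapse to $k_1\ell_1=k_4\ell_4$ and $p\,k_1\ell_1=q\,k_4\ell_4$; since $k_1\ell_1\neq0$, dividing gives $p=q$. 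I expect the reduction step to be the main obstacle: one must correctly pin down a basis of each degree-two component and track the coefficients after imposing the target relations, since any slip there propagates into a wrong linear system.
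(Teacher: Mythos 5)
Your proposal is correct and follows essentially the same route as the paper: expand $\Phi(ab-cd)$ and $\Phi(ba-q\,dc)$, reduce modulo the target relations $ab=cd$ and $ba=p\,dc$, equate coefficients of the independent degree-two monomials to get the four equations $k_1\ell_2=k_3\ell_4$, $k_2\ell_1=k_4\ell_3$, $k_1\ell_2=qk_3\ell_4$, $k_2\ell_1=qk_4\ell_3$, use $q\neq 1$ and $k_1,k_4\neq 0$ together with invertibility to force $k_2=k_3=\ell_2=\ell_3=0$, and then compare the $cd$ and $dc$ coefficients to get $p=q$. The only difference is that you explicitly justify the shape of $\Phi$ and the linear independence of the surviving monomials via the Hilbert series, which the paper leaves implicit; the deductions are otherwise identical up to the order in which the vanishing coefficients are extracted.
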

\begin{proof}
The relations give
\begin{align*}
0   &=\Phi(ab-cd) 
    %=(k_1a+k_2c)(\ell_1b+\ell_2d)-(k_3a+k_4c)(\ell_3b+\ell_4d) \\     
    = cd(k_1\ell_1+k_2\ell_2-k_3\ell_3-k_4\ell_4)+ad(k_1\ell_2-k_3\ell_4)+cb(k_2\ell_1-k_4\ell_3), \\
0 &= \Phi(ba-qdc)=dc(pk_1\ell_1+k_2\ell_2-qpk_3\ell_3-qk_4\ell_4)+bc(k_2\ell_1-qk_4\ell_3)+da(k_1\ell_2-qk_3\ell_4).
\end{align*}
By considering the coefficients of $ad, cb$ in the first equation and $bc,da$ in the second,

\vspace*{-.7\multicolsep}

\begin{center}
\noindent
\begin{minipage}{0.2\textwidth}
    \begin{equation}
        \label{eq.Aq1}    k_1\ell_2=k_3\ell_4,
    \end{equation} 
\end{minipage}%
\begin{minipage}{0.2\textwidth}
\end{minipage}%
\begin{minipage}{0.2\textwidth}
    \begin{equation}
        \label{eq.Aq2}    k_2\ell_1=k_4\ell_3,
    \end{equation}
\end{minipage}
\begin{minipage}{0.2\textwidth}
\end{minipage}%
\begin{minipage}{0.2\textwidth}
    \begin{equation}
        \label{eq.Aq3}    k_1\ell_2 = qk_3\ell_4,
    \end{equation}
\end{minipage}
\begin{minipage}{0.2\textwidth}
\end{minipage}%
\begin{minipage}{0.2\textwidth}
    \begin{equation}
        \label{eq.Aq4}    k_2\ell_1=qk_4\ell_3.
    \end{equation}
\end{minipage}
\end{center}

\vskip1em

Since $q \neq 1$, then \eqref{eq.Aq1} and \eqref{eq.Aq3} cannot both simultaneously be true, unless both sides are zero.  Since $k_1 \neq 0$, then $\ell_2=0$. Then $\ell_4 \neq 0$ by bijectivity, so $k_3=0$. A similar argument using \eqref{eq.Aq2} and \eqref{eq.Aq4} shows that $\ell_3=k_2=0$. Now the coefficient of $cd$ gives $k_1\ell_1=k_4\ell_4$ and the coefficient of $dc$ gives $pk_1\ell_1=qk_4\ell_4$, so $p=q$.
\end{proof}

\begin{proposition}\label{prop.A2}
Let $p,q\in \kk^\times$. Then $\Aq \iso \Ap$ if and only if $p = q^{\pm 1}$.
\end{proposition}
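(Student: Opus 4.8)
The ``if'' direction is immediate from Lemma \ref{lem.Aisos}: if $p=q$ the identity works, and if $p=q\inv$ then Lemma \ref{lem.Aisos} supplies $\Aq\iso\Aqinv=\Ap$. For the converse, the plan is to first reduce to graded isomorphisms via Theorem \ref{thm.graded}, so that any isomorphism $\Aq\iso\Ap$ may be taken to be a graded isomorphism $\Phi$, which is then either type I or type II. I would prove the self-contained claim that \emph{if $q\neq1$ and there is a graded isomorphism $\Aq\to\Ap$, then $p=q^{\pm1}$} (the hypothesis $q\neq1$ is exactly what makes Lemma \ref{lem.typeI} applicable), and then deduce the degenerate value $q=1$ by symmetry.

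To establish the claim, first suppose $\Phi$ is type I. Lemma \ref{lem.typeI} puts $\Phi$ in the form \eqref{eq.Phi} with $k_1k_4-k_2k_3\neq0$ and $\ell_1\ell_4-\ell_2\ell_3\neq0$, and the computation in its proof forces \eqref{eq.Aq1}--\eqref{eq.Aq4}, whence $k_1\ell_2=k_3\ell_4=k_2\ell_1=k_4\ell_3=0$ since $q\neq1$. A short case analysis on whether $\ell_1$ and $\ell_4$ vanish, combined with the two nonzero determinants, should show that the coefficient matrix must be either diagonal ($k_2=k_3=0$, with $k_1,k_4\neq0$) or antidiagonal ($k_1=k_4=0$, with $k_2,k_3\neq0$); a ``mixed'' matrix would force $\det\begin{psmatrix}k_1 & k_3\\ k_2 & k_4\end{psmatrix}=0$ and is excluded. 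In the diagonal case Lemma \ref{lem.typeI} already yields $p=q$. In the antidiagonal case I would compose with $\phi\colon\Ap\to A_2(p\inv)$ from Lemma \ref{lem.Aisos}(1): since $\phi$ interchanges $a\leftrightarrow c$ and $b\leftrightarrow d$, the composite $\phi\circ\Phi\colon\Aq\to A_2(p\inv)$ becomes a \emph{diagonal} type I isomorphism, so Lemma \ref{lem.typeI} gives $p\inv=q$, i.e.\ $p=q\inv$. Either way $p=q^{\pm1}$.

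If instead $\Phi$ is type II, I would compose with $\psi\colon\Ap\to A_2(p\inv)$ from Lemma \ref{lem.Aisos}(2); as both maps swap the vertices, the composite $\psi\circ\Phi\colon\Aq\to A_2(p\inv)$ fixes $e_1,e_2$ and is therefore type I, so the case just treated (with $q\neq1$) gives $p\inv=q^{\pm1}$, hence $p=q^{\pm1}$. This proves the claim. For the remaining value $q=1$, I would argue by symmetry: if $A_2(1)\iso\Ap$ with $p\neq1$, then symmetry of isomorphism together with Theorem \ref{thm.graded} yields a graded isomorphism $\Ap\to A_2(1)$, and the claim applied with source parameter $p\neq1$ forces $1=p^{\pm1}$, i.e.\ $p=1$, a contradiction; hence $p=1=q^{\pm1}$. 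The main obstacle is the type I analysis, since Lemma \ref{lem.typeI} only settles the diagonal subcase: the real work is ruling out mixed coefficient matrices and dispatching the antidiagonal subcase, after which the type II case and the value $q=1$ reduce to it formally.
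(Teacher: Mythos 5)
Your proposal is correct and follows essentially the same route as the paper: reduce to a graded isomorphism, split into type I and type II, use Lemma \ref{lem.typeI} directly when $k_1,k_4\neq0$, compose with $\phi$ from Lemma \ref{lem.Aisos}(1) to handle the antidiagonal case (giving $p=q\inv$), compose with $\psi$ to reduce type II to type I, and dispatch $q=1$ by running the argument on the inverse isomorphism. Your explicit verification that the coefficient matrix must be diagonal or antidiagonal is a slightly fuller version of the paper's observation that $k_1=0$ or $k_4=0$ forces $k_2,k_3\neq0$, but it is not a different argument.
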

\begin{proof}
If $p=q$ then the result is obvious. If $p=q\inv$, then $\Aq \iso \Ap$ by Lemma \ref{lem.Aisos}.

Now let $\Phi:\Aq \to \Ap$ be a (graded) isomorphism and $q \neq 1$. Suppose $\Phi$ is type I. Then $\Phi$ is given by \eqref{eq.Phi} with $k_1k_4-k_2k_3 \neq 0$ and $\ell_1\ell_4 - \ell_2\ell_3 \neq 0$. If $k_1,k_4 \neq 0$, then by Lemma \ref{lem.typeI}, $p=q$.

On the other hand, suppose $k_1=0$ or $k_4=0$. This implies that $k_2,k_3 \neq 0$. If $\phi$ is as in Lemma \ref{lem.Aisos}, then $\pi \circ \Phi:\Aq \to A_{p\inv}$ is an isomorphism of type I and Lemma \ref{lem.typeI} implies that $p\inv = q$.

Now suppose that $\Phi$ is type II. If $\psi$ is as in Lemma \ref{lem.Aisos}, then $\psi \circ \Phi$ is type I. Hence, the argument in the previous paragraph shows that $p = q^{\pm 1}$.

Finally, suppose $q=1$ and consider $\Phi\inv:\Ap \to \Aq$. If $p \neq 1$, then the argument above shows that $1 = q = p^{\pm 1} \neq 1$, a contradiction. Hence, the result holds for all $p,q \in \kk^\times$.
\end{proof}

\subsection{The \texorpdfstring{$\Dq$}{Type D} case}
This section establishes the corresponding isomorphism problem for the $\Dq$.

\begin{lemma}\label{lem.D}
Let $q \in \kk^\times$. 
\begin{enumerate}
\item The linear map $\phi:\Dq \to \Dqinv$ defined by $\phi(e_i)=e_1$, $\phi(e_2)=e_2$, and 
\begin{align*}
\phi(a)&=qa + (q-1)c, & \phi(b)&=qb, & \phi(c)&=c, & \phi(d)&=(q-1)b+d,
\end{align*}
extends to an isomorphism.

\item The linear map $\psi:\Dq \to \Dq$ defined by $\psi(e_1)=e_2$, $\psi(e_2)=e_1$, and
\begin{align*}
\psi(a)&=qd, & \psi(b)&=a+c, & \psi(c)&=b, & \psi(d)&=a,
\end{align*}
extends to an isomorphism.
\end{enumerate}
\end{lemma}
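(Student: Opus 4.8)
The plan is to follow the same template used for Lemma \ref{lem.Aisos}: for each of the two maps, first confirm that it is a bijection on the arrow space $V=\kk Q_1$, so that it extends to a graded automorphism $\tilde\phi$ (resp.\ $\tilde\psi$) of the free path algebra $\kk Q$, and then verify that it carries the defining relations of the source into the ideal generated by the defining relations of the target. Since the images of the relations will turn out to be \emph{nonzero} scalar multiples of relations in the target, the automorphism of $\kk Q$ will map the relation ideal of the source isomorphically onto that of the target, and hence descend to an isomorphism of the two quotients.

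For part (1), the map $\phi$ fixes the vertices, so it acts separately on $e_1 V e_2 = \langle a,c\rangle$ and on $e_2 V e_1 = \langle b,d\rangle$. In the ordered bases $(a,c)$ and $(b,d)$ its matrices are $\begin{psmatrix} q & 0 \\ q-1 & 1\end{psmatrix}$ and $\begin{psmatrix} q & q-1 \\ 0 & 1\end{psmatrix}$, each of determinant $q\in\kk^\times$, so $\phi$ is bijective on $V$. I would then apply $\phi$ to each relation and collect coefficients. The expected outcome is $\phi\bigl(ab-(a+c)d\bigr)=q\bigl(ab-(a+c)d\bigr)$ and $\phi(ba-qdc)=q^2\bigl(ba-q\inv dc\bigr)$; since the right-hand factors are precisely the defining relations of $\Dqinv$, both images vanish in $\Dqinv$, and $\phi$ descends to the desired isomorphism $\Dq\iso\Dqinv$.

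For part (2), $\psi$ swaps the two vertices, so it interchanges $e_1 V e_2$ and $e_2 V e_1$; relative to the bases $(a,c)$ and $(b,d)$ the two relevant matrices are $\begin{psmatrix} q & 0 \\ 0 & 1\end{psmatrix}$ and $\begin{psmatrix} 1 & 1 \\ 1 & 0\end{psmatrix}$, of determinants $q$ and $-1$, so $\psi$ is again bijective on $V$. The key computation is that $\psi$ interchanges the two relations up to sign and scalar: I expect $\psi\bigl(ab-(a+c)d\bigr)=-(ba-qdc)$ and $\psi(ba-qdc)=-q\bigl(ab-(a+c)d\bigr)$. Both images therefore lie in the relation ideal of $\Dq$, so $\psi$ descends to an automorphism of $\Dq$.

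The only part requiring genuine care is the cancellation of the cross terms in the relation computations, and this is where I expect the main (albeit mild) obstacle. For instance, in $\phi\bigl(ab-(a+c)d\bigr)$ the coefficient of $cb$ is $q(q-1)-(q-1)^2-(q-1)=(q-1)\bigl(q-(q-1)-1\bigr)=0$, and an analogous $bc$ term cancels in $\phi(ba-qdc)$; these cancellations are exactly what the $(q-1)$ entries in the definition of $\phi$ are engineered to produce, and they are the crux of well-definedness. Once these vanishings are verified, the rest is routine coefficient bookkeeping, and the bijectivity established above upgrades each homomorphism to an isomorphism.
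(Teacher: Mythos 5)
Your proposal is correct and follows essentially the same route as the paper: check bijectivity on the arrow space and verify that each defining relation is sent to a nonzero scalar multiple of a target relation (your predicted images $\phi(ab-(a+c)d)=q\bigl(ab-(a+c)d\bigr)$, $\phi(ba-qdc)=q^2\bigl(ba-q\inv dc\bigr)$, $\psi(ab-(a+c)d)=-(ba-qdc)$, and $\psi(ba-qdc)=-q\bigl(ab-(a+c)d\bigr)$ all match the paper's computations). The only difference is cosmetic: you justify bijectivity via determinants on $e_1Ve_2$ and $e_2Ve_1$, where the paper simply asserts it.
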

\begin{proof}
Both maps are clearly bijective. That $\phi$ respects the relations is established below:
\begin{align*}
\phi(b)\phi(a)-q\phi(d)\phi(c)
    &= (qb)(qa + (q-1)c) - q((q-1)b+d)c \\
    &= q^2ba + q(q-1)bc - q(q-1)bc - qdc \\
    &= q^2(ba-q\inv dc) = 0, \\
\phi(a)\phi(b) - (\phi(a)+\phi(c))\phi(d)
    &= (qa + (q-1)c)(qb) - q(a+c)((q-1)b+d) \\
    &= q^2ab + q(q-1)cb - q(q-1)ab - q(q-1)cb - q(a+c)d \\
    &= q(ab - (a+c)d)=0.
\end{align*}
The relations for $\psi$ are checked below:
\begin{align*}
\psi(b)\psi(a)-q\psi(d)\psi(c)
    &= (a+c)(qd) - qab = -q(ab - (a+c)d) = 0, \\    
\psi(a)\psi(b) - (\psi(a)+\psi(c))\psi(d)
    &= (qd)(a+c) - (qd+b)a
%    = qda + qdc - qda - ba
    = -(ba-qdc) = 0.
\end{align*}
This proves the result.
\end{proof}

\begin{proposition}\label{prop.D}
Let $p,q \in \kk^\times$. Then $\Dq \iso \Dp$ if and only if $p=q^{\pm 1}$.
\end{proposition}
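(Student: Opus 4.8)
The plan is to follow the template of Proposition \ref{prop.A2}. The \emph{if} direction is immediate: $p=q$ is trivial, and $p=q\inv$ is handled by the isomorphism $\Dq\iso\Dqinv$ furnished by Lemma \ref{lem.D}(1). For the \emph{only if} direction, suppose $\Dq\iso\Dp$. By Theorem \ref{thm.graded} I may assume the isomorphism is a graded path algebra isomorphism $\Phi$, which is then of type I or type II. Since $\psi$ from Lemma \ref{lem.D}(2) is a type II automorphism of $\Dq$, replacing $\Phi$ by $\Phi\circ\psi$ converts a type II isomorphism into a type I one; hence I may assume $\Phi$ is type I.

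Because the quiver \eqref{eq.A2} is not schurian, $\Phi$ need not act diagonally on the arrows. I would therefore write
\[
\Phi(a)=k_1a+k_2c,\quad \Phi(c)=k_3a+k_4c,\quad \Phi(b)=\ell_1b+\ell_2d,\quad \Phi(d)=\ell_3b+\ell_4d,
\]
with $(k_{ij}),(\ell_{ij})\in\GL_2(\kk)$. The next step is to apply $\Phi$ to the two defining relations $ab-(a+c)d$ and $ba-qdc$ of $\Dq$, expand each image as a degree-two element of $\kk Q$, and then reduce modulo the relations $ab=ad+cd$ and $ba=p\,dc$ of $\Dp$ (using them to eliminate the leading paths $ab$ and $ba$). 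Comparing coefficients of the surviving degree-two paths yields a system of six bilinear equations in $k_i,\ell_i,p,q$.

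The heart of the argument is solving this system by a case analysis governed by a single factorization. Combining the equations coming from the coefficients of $cb$ and $bc$ gives $\ell_3\big((q-1)k_4-k_2\big)=0$. If $\ell_3=0$, invertibility of $(\ell_{ij})$ and $(k_{ij})$ forces $k_2=0$ and $k_1,k_4,\ell_1,\ell_4\neq0$, and the coefficients of $cd$ and $dc$ then give $k_1\ell_1=k_4\ell_4$ and $p\,k_1\ell_1=q\,k_4\ell_4$, so $p=q$. If $\ell_3\neq0$, then $k_2=(q-1)k_4$; substituting this back and subtracting the $cd$- and $dc$-equations produces the factor $(p-q\inv)\big(k_1-(q-1)k_3\big)$. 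Since $k_1=(q-1)k_3$ together with $k_2=(q-1)k_4$ would force $\det(k_{ij})=0$, contradicting invertibility, the second factor cannot vanish, and we conclude $p=q\inv$. The degenerate value $q=1$ requires no separate treatment: the $\ell_3\neq0$ branch becomes vacuous, and the argument may alternatively be applied to $\Phi\inv$ exactly as at the end of Proposition \ref{prop.A2}.

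I expect the main obstacle to be the bookkeeping in the reduction modulo the $\Dp$ relations: the term $(a+c)d$ couples the images of $a$ and $c$, so the six equations are genuinely intertwined rather than splitting into independent $k$- and $\ell$-blocks as in the schurian case. Isolating the clean factorization $\ell_3((q-1)k_4-k_2)=0$ and, in the second branch, recognizing that the surviving algebraic factor forces a vanishing determinant, is the step where care is needed.
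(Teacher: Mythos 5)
Your proposal is correct and follows the same overall skeleton as the paper's proof: reduce to a type~I graded isomorphism by composing with $\psi$, write $\Phi$ via matrices $(k_{ij}),(\ell_{ij})\in\GL_2(\kk)$, extract six coefficient equations, and split on whether $\ell_3=0$. The $\ell_3=0$ branch is handled identically (one small imprecision: invertibility alone does not force $k_2=0$; you also need the $bc$-coefficient equation $k_2\ell_1=qk_4\ell_3$ together with $\ell_1\neq 0$). Where you genuinely depart from the paper is the $\ell_3\neq 0$ endgame, and your shortcut checks out: with $k_2=(q-1)k_4$ one gets $k_4\neq 0$ (else $\det(k_{ij})=0$) and hence $\ell_3=\tfrac{q-1}{q}\ell_1$ from the $bc$-equation, and then subtracting the $cd$-equation from the $dc$-equation kills the $k_2\ell_4$ and $k_4\ell_4$ terms and yields $(pq-1)\bigl(k_1-(q-1)k_3\bigr)=0$ after dividing by $\ell_1\neq0$; since $k_1=(q-1)k_3$ together with $k_2=(q-1)k_4$ forces $\det(k_{ij})=0$, you conclude $p=q\inv$. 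This uses only four of the six equations (the $cb$, $bc$, $cd$, $dc$ coefficients) and bypasses the paper's intermediate deductions $k_1\neq0$, $\ell_4\neq0$, the factorization $(k_1-qk_4)(k_1-(q-1)k_3)=0$, and the final computation $k_3=\tfrac{q}{q-1}k_4$ -- note that the paper's terminal contradiction is in fact the same condition $k_1=(q-1)k_3$ in disguise. Your treatment of $q=1$ is also fine: the $\ell_3\neq0$ branch self-destructs there ($\ell_3=\tfrac{q-1}{q}\ell_1=0$), so only $p=q$ survives. In short, same strategy, but a cleaner and shorter resolution of the main case.
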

\begin{proof}
If $p=q^{\pm 1}$, then $\Dq \iso \Dp$ by Lemma \ref{lem.D}. 

Let $\Phi:\Dq \to \Dp$ be an isomorphism. After possibly composing with $\psi$ as in Lemma \ref{lem.D}, $\Phi$ may be assumed to be type I. Thus, there are scalars $k_i,\ell_i \in \kk$ satisfying $k_1k_4-k_2k_3 \neq 0$ and $\ell_1\ell_4-\ell_2\ell_3 \neq 0$ such that
\begin{align*}
\Phi(a)=k_1a+k_2c, \qquad
\Phi(b)=\ell_1b+\ell_2d, \qquad
\Phi(c)=k_3a+k_4c, \qquad
\Phi(d)=\ell_3b+\ell_4d.
\end{align*}
Then
\begin{align*}
    0 &=\Phi(ab-(a+c)d)
    = ad(k_1\ell_1+k_1\ell_2-k_1\ell_3-k_1\ell_4-k_3\ell_3-k_3\ell_4) \\
    &\qquad + cb(k_2\ell_1-k_2\ell_3-k_4\ell_3)+ cd(k_1\ell_1+k_2\ell_2-k_1\ell_3-k_2\ell_4-k_3\ell_3-k_4\ell_4), \\
    0 &= \Phi(ba-qdc) = bc(k_2\ell_1-qk_4\ell_3)+da(k_1\ell_2-qk_3\ell_4)+dc(pk_1\ell_1+k_2\ell_2-pqk_3\ell_3-qk_4\ell_4).
\end{align*}
This implies the following equations:
\begin{align}
\label{eq.D1} k_2\ell_1 &= k_2\ell_3+k_4\ell_3 \\
\label{eq.D2} k_1\ell_2+k_1\ell_1 
    &= k_1\ell_4+k_3\ell_4+k_1\ell_3+k_3\ell_3 \\
\label{eq.D3}    k_2\ell_2+k_1\ell_1 
    &= k_2\ell_4+k_4\ell_4+k_1\ell_3+k_3\ell_3 \\
\label{eq.D4}    k_2\ell_1&=qk_4\ell_3 \\
\label{eq.D5}    k_1\ell_2&=qk_3\ell_4 \\
\label{eq.D6}    k_2\ell_2+pk_1\ell_1
    &= qk_4\ell_4+pqk_3\ell_3.
\end{align}

Suppose $\ell_3=0$. Then, by \eqref{eq.D4}, $k_2\ell_1=0$. If $\ell_1=0$ then $\Phi$ is not injective, so $k_2=0$. So by \eqref{eq.D6}, $pk_1\ell_1=qk_4\ell_4$, and \eqref{eq.D3} gives that $k_1\ell_1=k_4\ell_4$, so $p=q$. Henceforth, assume that $\ell_3 \neq 0$.

From \eqref{eq.D1} and \eqref{eq.D4}, $(q-1)k_4\ell_3=k_2\ell_3$, so 
\begin{align}\label{eq.D7}
    (q-1)k_4=k_2.
\end{align}
If $q=1$, then $k_2=0$ and from \eqref{eq.D4}, $k_4\ell_3=0$, so $k_4=0$ violating surjectivity. Henceforth, assume $q \neq 1$.

Suppose $k_1=0$. By \eqref{eq.D5}, $qk_3\ell_4=0$. If $k_3=0$ then $\Phi$ is not surjective, so $\ell_4=0$. But then \eqref{eq.D2} gives $k_3\ell_3=0$, a contradiction, so $k_1\neq0$. A symmetric argument gives that $\ell_4 \neq 0$. Thus, \eqref{eq.D5} gives 
\begin{align}\label{eq.D8}
    \ell_2 = q\frac{k_3}{k_1}\ell_4.
\end{align}

Taking the difference of \eqref{eq.D2} and \eqref{eq.D3} gives
\begin{align*}
    (k_1-k_2)\ell_2 &= (k_1-k_2+k_3-k_4)\ell_4 \\
    (k_1-k_2)q\frac{k_3}{k_1}\ell_4 &= (k_1-k_2+k_3-k_4)\ell_4 
        \qquad \text{by \eqref{eq.D8}} \\ 
    (k_1-k_2)qk_3 &= (k_1-k_2+k_3-k_4)k_1 \\
    0 &= k_1^2 - k_1(k_2+k_4) + (1-q)k_1k_3 + qk_2k_3 \\
    0 &= k_1^2 - qk_1k_4 + (1-q)k_1k_3 + q(q-1)k_4k_3 
        \qquad \text{by \eqref{eq.D7}} \\
    0 &= (k_1 - qk_4)(k_1 - (q-1)k_3).
\end{align*}
If $k_1=(q-1)k_3$, then $k_1k_4-k_2k_3 = 0$. Thus,
\begin{align}\label{eq.D9}
    k_1=qk_4.
\end{align}

By \eqref{eq.D4} and \eqref{eq.D7}, $(q-1)k_4\ell_1=qk_4\ell_3$. If $k_4=0$, then $k_2=0$ by \eqref{eq.D7}, violating surjectivity. Thus, 
\begin{align}\label{eq.D9a}
    \ell_3=(1-q\inv)\ell_1. 
\end{align}
Plugging this and \eqref{eq.D7} into \eqref{eq.D3} gives 
\begin{align}\label{eq.D10}
    k_2\ell_2+q\inv k_1\ell_1=qk_4\ell_4+k_3\ell_3. 
\end{align}
Subtracting \eqref{eq.D10} that from \eqref{eq.D6} gives 
$(p-q\inv)k_1\ell_1 =(pq-1)k_3\ell_3$.
This $p=q\inv$, then the proof is complete. Assume $p \neq q\inv$, then $q\inv k_1\ell_1 =k_3\ell_3$. By \eqref{eq.D9} and \eqref{eq.D9a}, $k_4\ell_1 = k_3(1-q\inv )\ell_1$. If $\ell_1=0$, then \eqref{eq.D9a} gives $\ell_3=0$, violating surjectivity. Thus,
\begin{align}\label{eq.D12}
    k_3 &=\frac{q}{q-1}k_4.
\end{align}

%Suppose $pq=1$. From \eqref{eq.D2}, \eqref{eq.D9}, and \eqref{eq.D9a} we have
%\begin{align*}
%k_1\ell_2+k_1\ell_1 &= k_1\ell_4+k_3\ell_4+k_1\ell_3+k_3\ell_3
%k_1\ell_2+k_1\ell_1 &= qk_4\ell_4+k_3\ell_4+(1-q\inv)k_1\ell_1+k_3\ell_3 \\
%k_1\ell_2 + p k_1\ell_1 &= qk_4\ell_4+k_3\ell_4+k_3\ell_3.
%\end{align*}
%Subtracting \eqref{eq.D6} gives $k_3\ell_4=0$. If $\ell_4=0$, then $\ell_2=0$ by \eqref{eq.D8}, violating surjectivity. If $k_3=0$, then $k_1$

%\begin{align*}
%q\inv k_1\ell_1 &=k_3\ell_3 \\
%q\inv k_1\ell_1 &= k_3(1-q\inv )\ell_1 \quad\text{by \eqref{eq.D9a}} \\
%q\inv k_1 &=(1-q\inv)k_3 \\
%k_4 &=(1-q\inv)k_3 \\
%k_3 &=\frac{q}{q-1}k_4
%\end{align*}

Now, note that 
\[ \begin{vmatrix}k_1 & k_2 \\ k_3 & k_4\end{vmatrix}=\begin{vmatrix}qk_4 & (q-1)k_4 \\ \frac{q}{q-1}k_4 & k_4\end{vmatrix}=k_4\begin{vmatrix}q & (q-1) \\ \frac{q}{q-1} & 1\end{vmatrix}=0,\]
violating bijectivity of $\Phi$. Thus, if $\ell_3 \neq 0$, then $p=q\inv$.
\end{proof}

\subsection{The classification}

Combining the results above now gives the classification referenced at the beginning on this section.

\begin{theorem}\label{thm.class}
For $q \in \kk^\times$, the algebras $\Aq$, $\Bq$, $\algJ$, and $\Dq$ are twisted graded Calabi--Yau. If $\cA$ is an algebra satisfying Hypothesis \ref{hyp.CY2}, then $\cA$ is isomorphic to one of the $\Aq$, $\Bq$, $\algJ$, or $\Dq$. Moreover, these algebras are pairwise non-isomorphic except $\Aq \iso \Aqinv$ and $\Dq \iso \Dqinv$.
\end{theorem}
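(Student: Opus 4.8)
The plan is to assemble Theorem \ref{thm.class} from the lemmas and propositions already established, treating it as three separate assertions. First, the twisted Calabi--Yau claim is immediate: Lemmas \ref{lem.AnCY}, \ref{lem.BnCY}, \ref{lem.CCY}, and \ref{lem.DCY} establish exactly this for $\Aq$ (via $A_2(\bq)$), $\Bq$ (via $B_2(\bq)$), $\algJ$, and $\Dq$ respectively, so one need only cite them. Second, for the realization claim, I would invoke Lemma \ref{lem.2vert} to reduce to the two possible adjacency matrices, and then apply Lemma \ref{lem.M1} to the matrix $\begin{psmatrix}0 & 2 \\ 2 & 0\end{psmatrix}$ (yielding $\Aq$ or $\Dq$) and Lemma \ref{lem.M2} to the matrix $\begin{psmatrix}1 & 1 \\ 1 & 1\end{psmatrix}$ (yielding $\algJ$ or $\Bq$ according to whether $P=I_2$). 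Together these cover every algebra satisfying Hypothesis \ref{hyp.CY2}, so $\cA$ is isomorphic to one of the four families.

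The bulk of the remaining work is the pairwise non-isomorphism statement, and here I would organize the argument by first separating the four families using discrete invariants, and then handling isomorphisms within each family. The cleanest coarse invariant is the adjacency matrix together with the permutation matrix $P$ (equivalently, whether the Nakayama automorphism is trivial on vertices), since Theorem \ref{thm.graded} guarantees any algebra isomorphism is realized by a graded path-algebra isomorphism, hence respects these data up to the vertex permutation. By Lemmas \ref{lem.M1} and \ref{lem.M2}, the pair $(M,P)$ distinguishes $\{\Aq,\Dq\}$ (where $M=\begin{psmatrix}0 & 2 \\ 2 & 0\end{psmatrix}$, $P=I_2$) from $\algJ$ (where $M=\begin{psmatrix}1 & 1 \\ 1 & 1\end{psmatrix}$, $P=I_2$) from $\Bq$ (where $M=\begin{psmatrix}1 & 1 \\ 1 & 1\end{psmatrix}$, $P\neq I_2$). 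This immediately rules out isomorphisms between any of $\Bq$, $\algJ$, and the pair $\{\Aq,\Dq\}$, since the underlying quivers are non-isomorphic. The within-family isomorphism questions for $\Aq$ and $\Dq$ are settled by Propositions \ref{prop.A2} and \ref{prop.D}, giving $\Aq\iso\Ap$ iff $p=q^{\pm1}$ and $\Dq\iso\Dp$ iff $p=q^{\pm1}$, while Corollary \ref{cor.B2} gives $\Bq\iso\Bp$ iff $p=q$.

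The step I expect to be the main obstacle is distinguishing $\Aq$ from $\Dp$, since these two families live on the identical quiver \eqref{eq.A2} with the same $(M,P)$, so the adjacency-matrix invariant does not separate them. Here I would argue directly: suppose $\Phi:\Aq\to\Dp$ is a graded isomorphism, which by Theorem \ref{thm.graded} may be taken to be a graded path-algebra map. The relations of $\Aq$ have the form $ab-cd$ and $ba-qdc$, both of which are differences of two monomials, whereas the defining relation $ab-(a+c)d$ of $\Dp$ is genuinely a sum of three monomials in any basis adapted to the quiver structure. I would make this precise by examining the space of quadratic relations as a subspace of $\kk Q_2$ and checking that no change of basis on $\kk Q_1$ (preserving the vertex grading, so block-diagonal as in the parametrizations used in Lemmas \ref{lem.typeI} and \ref{prop.D}) can carry the two-term relation space of $\Aq$ onto the relation space of $\Dp$. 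Concretely, one tracks the rank or the number of essential monomials in the relation ideal's degree-two component under the admissible linear substitutions $\Phi(a)=k_1a+k_2c$, $\Phi(c)=k_3a+k_4c$, $\Phi(b)=\ell_1b+\ell_2d$, $\Phi(d)=\ell_3b+\ell_4d$; the point is that the $\Dp$ relation forces a nontrivial linear dependence among the images that the $\Aq$ relations cannot produce. Once this obstruction is verified, all cross-family cases are closed, and combining with the three within-family results completes the proof.
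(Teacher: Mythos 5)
Your assembly of the first two assertions and of the within-family results matches the paper's proof: the same four Calabi--Yau lemmas, the same reduction via Lemmas \ref{lem.2vert}, \ref{lem.M1}, \ref{lem.M2}, the same use of the pair $(M,P)$ to separate $\Bq$, $\algJ$, and $\{\Aq,\Dq\}$, and the same citations of Propositions \ref{prop.A2} and \ref{prop.D} and Corollary \ref{cor.B2}. The genuine gap is in the step you yourself flag as the main obstacle: distinguishing $\Aq$ from $\Dp$. The obstruction you describe does not exist. If you carry out the computation you propose --- forcing $\Phi(a)=k_1a+k_2c$, $\Phi(c)=k_3a+k_4c$, $\Phi(b)=\ell_1b+\ell_2d$, $\Phi(d)=\ell_3b+\ell_4d$ to carry the span of $\{ab-cd,\ ba-qdc\}$ onto the span of $\{ab-(a+c)d,\ ba-pdc\}$ --- the resulting system is solvable whenever $q\neq 1$. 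Explicitly, for $q\neq 1$ the type I substitution
\[
\Phi(a)=a,\qquad \Phi(c)=\tfrac{1}{1-q}\,a+c,\qquad \Phi(b)=b+\tfrac{q}{1-q}\,d,\qquad \Phi(d)=d
\]
satisfies $\Phi(ab-cd)=ab-(a+c)d$ and $\Phi(ba-qdc)=ba-qdc$, hence induces a surjective graded map $\Aq\to\Dq$, which is an isomorphism because both algebras have Hilbert series $(I-Mt+It^2)\inv$. So the ``number of essential monomials'' in a relation is not invariant under these substitutions, and your argument cannot be completed; the non-isomorphism you are trying to prove is false for $q\neq 1$. (The genuinely exceptional case is $q=1$: there the equations $k_1\ell_2-k_3\ell_4=-t\neq 0$ and $k_1\ell_2-qk_3\ell_4=0$ coming from the $ad$ and $da$ coefficients are incompatible, so $A_2(1)\not\iso \Dp$ for any $p$, and $D(1)\not\iso\Dp$ for $p\neq 1$ by Proposition \ref{prop.D}.)

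To be fair, the paper's own treatment of this step is the single sentence that the proof of Lemma \ref{lem.M1} shows $\Aq\not\iso\Dp$, which is not a proof either: the dichotomy in that lemma is governed by the diagonalizability of $\alpha\beta$, and for the map $\tau$ presenting $\Dq$ in Lemma \ref{lem.DCY} one computes $\alpha\beta=\begin{psmatrix}1&0\\ q&q\end{psmatrix}$, which has eigenvalues $1$ and $q$ and is diagonalizable precisely when $q\neq 1$. The explicit isomorphism above is therefore consistent with Lemma \ref{lem.M1}, and the correct statement is that $\Dq\iso\Aq$ for $q\neq 1$, with $D(1)$ the unique exceptional algebra on the quiver \eqref{eq.A2}. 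Your proposal should be revised to prove that corrected dichotomy (e.g., by showing the conjugacy class of $\alpha\beta$, equivalently the semisimplicity of the degree-one part of the Nakayama data, is an isomorphism invariant) rather than the pairwise non-isomorphism of $\Aq$ and $\Dp$ as stated.
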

\begin{proof}
By Lemmas \ref{lem.AnCY}, \ref{lem.BnCY}, \ref{lem.CCY}, and \ref{lem.DCY}, the given algebras are all twisted Calabi--Yau.

By Lemmas \ref{lem.2vert}, \ref{lem.M1}, and \ref{lem.M2},  every $\cA$ satisfying Hypothesis \ref{hyp.CY2} belongs to one of the four families: $\Aq$, $\Bq$, $\algJ$, $\Dq$. The algebras $\Aq,\Dq$ are nonisomorphic to $\Bq,\algJ$ because they live of distinct quivers. The proof of Lemma \ref{lem.M1} shows that $\Aq \not\iso \Dp$ for any $p,q \in \kk^\times$ and Lemma \ref{lem.M2} shows $\Bq \not\iso \algJ$ because the Nakayama automorphisms are different.

By Proposition \ref{prop.A2}, $\Aq \iso \Ap$ if and only if $p=q^{\pm 1}$. By Corollary \ref{cor.B2}, $\Bq \iso \Bp$ if and only if $p=q$. By Proposition \ref{prop.D}, $\Dp \iso \Dq$ if and only if $p=q^{\pm 1}$.
\end{proof}

%\bibliographystyle{myplain}
%\bibliography{biblio}

\end{document}